\newtheorem{theorem}{Theorem}[section]
\newtheorem{proposition}[theorem]{Proposition}
\newtheorem{lemma}[theorem]{Lemma}
\newtheorem{cor}[theorem]{Corollary}
\theoremstyle{definition}
\newtheorem{definition}[theorem]{Definition}
\numberwithin{equation}{section}
\begin{document}
\title[Certain character sums and hypergeometric series]
{Certain character sums and hypergeometric series}


\author{Rupam Barman}
\address{Department of Mathematics, Indian Institute of Technology Guwahati, North Guwahati, Guwahati-781039, Assam, INDIA}
\curraddr{}
\email{rupam@iitg.ac.in}
\author{Neelam Saikia}
\address{Department of Mathematics, Indian Institute of Technology Guwahati, North Guwahati, Guwahati-781039, Assam, INDIA}
\curraddr{}
\email{neelam16@iitg.ernet.in}
\thanks{}


\subjclass[2010]{33E50, 33C99, 11S80, 11T24.}
\date{13th February 2018}
\keywords{character sum; hypergeometric series; $p$-adic gamma function.}
\thanks{Acknowledement: This work is partially
supported by a start up grant of the first author awarded by Indian Institute of Technology Guwahati. The second author acknowledges
the financial support of Department of Science and Technology, Government of India for supporting a part of this work under INSPIRE Faculty Fellowship.}
\begin{abstract} We prove two transformations for the $p$-adic hypergeometric series which can be described as $p$-adic analogues of a Kummer's linear transformation and a
transformation of Clausen. We first evaluate two character sums, and then relate them to the $p$-adic hypergeometric series to deduce the transformations.
We also find another transformation for the $p$-adic hypergeometric series from which many special values of the $p$-adic hypergeometric series as well as
finite field hypergeometric functions are obtained.
\end{abstract}
\maketitle
\section{Introduction and statement of results}

For a complex number $a$, the rising factorial or the Pochhammer symbol is defined as $(a)_0=1$ and $(a)_k=a(a+1)\cdots (a+k-1), ~k\geq 1$.
For a non-negative integer $r$, and $a_i, b_i\in\mathbb{C}$ with $b_i\notin\{\ldots, -3,-2,-1\}$,
the classical hypergeometric series ${_{r+1}}F_{r}$ is defined by
\begin{align}
{_{r+1}}F_{r}\left(\begin{array}{cccc}
                   a_1, & a_2, & \ldots, & a_{r+1} \\
                    & b_1, & \ldots, & b_r
                 \end{array}| \lambda
\right):=\sum_{k=0}^{\infty}\frac{(a_1)_k\cdots (a_{r+1})_k}{(b_1)_k\cdots(b_r)_k}\cdot\frac{\lambda^k}{k!},\notag
\end{align}
which converges for $|\lambda|<1$.
Throughout the paper $p$ denotes an odd prime and $\mathbb{F}_q$ denotes the finite field with $q$ elements, where $q=p^r, r\geq 1$.
Greene \cite{greene} introduced the notion of hypergeometric functions over finite fields analogous to the classical hypergeometric series. Finite field
hypergeometric series were developed mainly to simplify character sum evaluations. Let $\widehat{\mathbb{F}_q^{\times}}$ be the group of all multiplicative
characters on $\mathbb{F}_q^{\times}$. We extend the domain of each $\chi\in \widehat{\mathbb{F}_q^{\times}}$ to $\mathbb{F}_q$ by setting $\chi(0)=0$
including the trivial character $\varepsilon$. For multiplicative characters $A$ and $B$ on $\mathbb{F}_q$,
the binomial coefficient ${A \choose B}$ is defined by
\begin{align}\label{eq-0}
{A \choose B}:=\frac{B(-1)}{q}J(A,\overline{B})=\frac{B(-1)}{q}\sum_{x \in \mathbb{F}_q}A(x)\overline{B}(1-x),
\end{align}
where $J(A, B)$ denotes the usual Jacobi sum and $\overline{B}$ is the character inverse of $B$.
Let $n$ be a positive integer.
For characters $A_0, A_1,\ldots, A_n$ and $B_1, B_2,\ldots, B_n$ on $\mathbb{F}_q$,
Greene defined the ${_{n+1}}F_n$ finite field hypergeometric functions over $\mathbb{F}_q$ by
\begin{align}
{_{n+1}}F_n\left(\begin{array}{cccc}
                A_0, & A_1, & \ldots, & A_n\\
                 & B_1, & \ldots, & B_n
              \end{array}\mid x \right)_q
              =\frac{q}{q-1}\sum_{\chi\in \widehat{\mathbb{F}_q^\times}}{A_0\chi \choose \chi}{A_1\chi \choose B_1\chi}
              \cdots {A_n\chi \choose B_n\chi}\chi(x).\notag
\end{align}
\par 
Some of the biggest motivations for studying finite field hypergeometric functions have
been their connections with Fourier coefficients and eigenvalues of modular forms and
with counting points on certain kinds of algebraic varieties. Their  links  to  Fourier  coefficients and eigenvalues  of
modular forms are established by many authors, for example, see \cite{ahlgren, evans-mod, frechette, fuselier, Fuselier-McCarthy, lennon, mccarthy4, mortenson}.
Very recently, McCarthy and Papanikolas \cite{mc-papanikolas} linked the finite field hypergeometric functions to Siegel modular forms. 
It is well-known that finite field hypergeometric functions can be used to count points on varieties over finite fields. For example, see \cite{BK, BK1, fuselier, koike, 
lennon2, ono, salerno, vega}.
\par
Since the multiplicative characters on $\mathbb{F}_q$ form a cyclic group of order $q-1$, a condition like $q\equiv 1 \pmod{\ell}$
must be satisfied where $\ell$ is the least common multiple of the orders of the characters appeared in the hypergeometric function.
Consequently, many results involving these functions are restricted to primes in certain congruence classes. To overcome these restrictions, McCarthy \cite{mccarthy1, mccarthy2}
defined a function
${_{n}}G_{n}[\cdots]_q$ in terms of quotients of the $p$-adic gamma function
which can best be described as an analogue of hypergeometric series in the $p$-adic setting (defined in Section 2).
\par Many transformations exist for  finite  field hypergeometric functions which are analogues of certain classical results \cite{greene, mccarthy3}.
Results involving  finite  field hypergeometric functions can readily be converted to expressions involving ${_{n}}G_{n}[\cdots]$.
However these new expressions in ${_{n}}G_{n}[\cdots]$ will be valid for the same set of primes for which the original expressions 
involving finite field hypergeometric functions existed.
 It is a non-trivial exercise to then extend these results to almost all primes.
There are very few identities and transformations for the $p$-adic hypergeometric series ${_{n}}G_{n}[\cdots]_q$ which exist for all but finitely many primes
(see for example \cite{BS2, BS1, BSM}).
Recently, Fuselier and McCarthy \cite{Fuselier-McCarthy} proved certain transformations for ${_{n}}G_{n}[\cdots]_q$, and used them to establish a supercongruence
conjecture of Rodriguez-Villegas between a truncated ${_4}F_3$ hypergeometric series and the Fourier coefficients of a certain weight four modular form.
\par Let $\chi_4$ be a character of order $4$. Then a finite field analogue of ${_2F}_{1}\left(\begin{array}{cc}
           \frac{1}{4}, & \frac{3}{4} \\
           &1
         \end{array}|x\right)$ is the function ${_2F}_{1}\left(\begin{array}{cc}
           \chi_4, & \chi_4^3 \\
           &\varepsilon
         \end{array}|x\right)$.
Using the relation between finite field hypergeometric functions and ${_n}G_n$-functions as given in Proposition \ref{prop-301} in Section 3,
the function ${_2G}_{2}\left[\begin{array}{cc}
           \frac{1}{4}, & \frac{3}{4} \\
           0, & 0
         \end{array}|\dfrac{1}{x}\right]_q$ can be described as a $p$-adic analogue of the classical hypergeometric series ${_2F}_{1}\left(\begin{array}{cc}
           \frac{1}{4}, & \frac{3}{4} \\
           &1
         \end{array}|x\right)$.
In this article, we prove the following transformation for the $p$-adic hypergeometric series which can be described as a $p$-adic analogue of
the Kummer's linear transformation \cite[p. 4, Eq. (1)]{bailey}.
Let $\varphi$ be the quadratic character on $\mathbb{F}_q$.
\begin{theorem}\label{MT-1}
Let $p$ be an odd prime and $x\in \mathbb{F}_q$. Then, for $x\neq 0, 1$, we have
\begin{align}
{_2G}_{2}\left[\begin{array}{cc}
           \frac{1}{4}, & \frac{3}{4} \\
           0, & 0
         \end{array}|\frac{1}{x}\right]_q=\varphi(-2){_2G}_{2}\left[\begin{array}{cc}
           \frac{1}{4}, & \frac{3}{4} \\
           0, & 0
         \end{array}|\frac{1}{1-x}\right]_q.\notag
\end{align}
\end{theorem}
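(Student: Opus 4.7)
Following the approach announced in the abstract, I would attack the theorem by evaluating a single character sum $S(x)$ over $\mathbb{F}_q$ in two different ways and equating the two evaluations. First, using Proposition \ref{prop-301}, I would re-express ${}_2G_2\left[\begin{smallmatrix} \tfrac14, & \tfrac34 \\ 0, & 0 \end{smallmatrix} \mid 1/x\right]_q$ in terms of Greene's finite-field hypergeometric function ${}_2F_1(\chi_4,\chi_4^3;\varepsilon;x)_q$, where $\chi_4$ is a character of order $4$ on $\mathbb{F}_q^\times$. Expanding this $_2F_1$ via the definition of the binomial coefficient in \eqref{eq-0} and opening the Jacobi sums would give an identity of the form
$$
{}_2G_2\left[\begin{array}{cc} \tfrac14, & \tfrac34 \\ 0, & 0 \end{array} \Big| \tfrac1x\right]_q
= (\text{normalising Jacobi-sum factor})\cdot S(x),
$$
with $S(x)$ a character sum over $y\in\mathbb{F}_q$ of products of the characters $\chi_4$, $\chi_4^3$ and $\varphi$ evaluated at linear functions of $y$ and $xy$. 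This step initially assumes $q\equiv 1\pmod{4}$, so that $\chi_4$ is available; the remaining primes will be handled at the end.

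The heart of the argument is then to produce a birational change of variables $y \mapsto \phi_x(y)$ on $\mathbb{F}_q$ that sends $S(x)$ to a constant multiple of $S(1-x)$. The classical template is the Pfaff--Kummer substitution $z\mapsto z/(z-1)$ underlying Bailey \cite[p. 4, Eq. (1)]{bailey}, which after a suitable dualisation realises the involution $x\leftrightarrow 1-x$. After performing the substitution and regrouping, each character factor in the integrand of $S(x)$ should reassemble into the corresponding factor of $S(1-x)$; the residual scalar coming from the Jacobian, combined with the elementary identities $\chi_4(-1)\chi_4^3(-1)=\varphi(-1)$ and $\chi_4(2)\chi_4^3(2)=\varphi(2)$, should collapse precisely to $\varphi(-2)$, yielding the claimed transformation for the class of primes with $q\equiv 1\pmod 4$.

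Finally, since ${}_2G_2[\cdots]_q$ is defined via quotients of the $p$-adic Gamma function for every odd prime $p$, I would lift the character-sum identity to a $\Gamma_p$-quotient identity using the Gross--Koblitz formula, thereby extending the transformation to the primes with $q\not\equiv 1\pmod 4$ for which $\chi_4$ does not exist on $\mathbb{F}_q^\times$; this last step follows a pattern already standard in \cite{BS2, BS1, BSM, Fuselier-McCarthy}. The principal obstacle will be the middle step: guessing the correct substitution $\phi_x$ and tracking each character factor and each Jacobi-sum normalisation with enough care that the overall scalar simplifies to exactly $\varphi(-2)$; a secondary challenge is verifying that the resulting $\Gamma_p$-quotient identity indeed persists uniformly across all residue classes of $q \bmod 4$.
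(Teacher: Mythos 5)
There is a genuine gap, and it sits exactly where you relegate the difficulty to a ``secondary challenge.'' Your plan is: prove the identity for $q\equiv 1\pmod 4$ by converting ${_2G}_2$ to Greene's ${_2F}_1(\chi_4,\chi_4^3;\varepsilon;\cdot)_q$ via Proposition \ref{prop-301}, find a substitution realizing $x\leftrightarrow 1-x$ at the level of a character sum involving $\chi_4$, and then ``lift'' the result to the remaining primes via Gross--Koblitz. The first two steps are plausible --- indeed the finite-field Kummer transformation for $q\equiv 1\pmod 4$ is essentially already in Greene's thesis, as the paper notes --- but the third step has no mechanism behind it. An identity verified only for $q\equiv 1\pmod 4$ does not transfer to $q\equiv 3\pmod 4$ by ``lifting'': the introduction of this very paper warns that converting finite-field identities to ${_nG}_n$ identities yields statements valid only for the same set of primes, and that extending them to all primes is a non-trivial exercise. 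Since \eqref{tr1} itself requires $q\equiv 1\pmod 4$, your starting move already confines you to the congruence class where the result is old, and the entire content of Theorem \ref{MT-1} --- its validity for every odd prime --- is left unproved.

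The paper's route is structured to avoid this obstruction from the outset. It introduces the single character sum $\sum_{y\in\mathbb{F}_q}\varphi(y)\varphi(1-2y+xy^2)$, which involves only the quadratic character $\varphi$ and hence exists for every odd $q$, and evaluates it twice: Proposition \ref{prop-1} identifies it with $-\varphi(-2)\,{_2G}_2[\cdots\mid 1/x]_q$ and Proposition \ref{prop-2} with $-{_2G}_2[\cdots\mid 1/(1-x)]_q$. In each case the parameters $\tfrac14,\tfrac34$ are manufactured not from a quartic character but from the Davenport--Hasse relation applied to $g(\varphi\chi^2)$, followed by Gross--Koblitz and the floor-function bookkeeping of Lemma \ref{lemma3.2}. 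Equating the two evaluations gives the theorem for all odd primes in one stroke. If you want to salvage your approach, you would need to replace the ``lift'' by an argument of this kind --- i.e., re-derive your $q\equiv 1\pmod 4$ character sum in a form that never references $\chi_4$ --- at which point you would essentially be reconstructing the paper's two propositions.
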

We note that the finite field analogue of Kummer's linear transformation was discussed by Greene \cite[p. 109, Eq. (7.7)]{greene2} when $q\equiv 1\pmod{4}$.
\par We have $\varphi(-2)=-1$ if and only if $p\equiv 5, 7\pmod{8}$. Hence, using Theorem \ref{MT-1} for $x=\frac{1}{2}$, we obtain the following special value of the ${_n}G_n$-function.
\begin{cor} Let $p$ be a prime such that $p\equiv 5, 7\pmod{8}$. Then we have
\begin{align}\label{spv-1}
{_2G}_{2}\left[\begin{array}{cc}
           \frac{1}{4}, & \frac{3}{4} \\
           0, & 0
         \end{array}|2\right]_p=0.
         \end{align}
\end{cor}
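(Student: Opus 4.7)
The strategy is to specialize Theorem \ref{MT-1} at the unique fixed point of the involution $x \mapsto 1-x$, namely $x = 1/2$. Since $p$ is odd, $1/2 \in \mathbb{F}_p$ is well-defined and distinct from $0$ and $1$, so Theorem \ref{MT-1} applies (taking $q = p$). The crucial observation is that at this point $1/x = 2 = 1/(1-x)$, so both $_2G_2$-values appearing in Theorem \ref{MT-1} collapse to a single common quantity
\begin{equation*}
A := {_2G}_{2}\left[\begin{array}{cc}
           \frac{1}{4}, & \frac{3}{4} \\
           0, & 0
         \end{array}|2\right]_p,
\end{equation*}
and the theorem reduces to the scalar identity $A = \varphi(-2)\,A$, equivalently $(1 - \varphi(-2))\,A = 0$.

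It then remains to pin down exactly when $\varphi(-2) = -1$. By multiplicativity of the quadratic character combined with the standard supplementary laws $\varphi(-1) = (-1)^{(p-1)/2}$ and $\varphi(2) = (-1)^{(p^2-1)/8}$, a short case check modulo $8$ yields $\varphi(-2) = \varphi(-1)\varphi(2) = -1$ precisely for $p \equiv 5, 7 \pmod{8}$, as already noted in the paragraph preceding the corollary. Under this hypothesis, the identity above becomes $2A = 0$, and since the function ${_2G}_2[\cdots]_p$ takes values in $\mathbb{Z}_p$ with $p$ odd, the factor $2$ is a unit; cancelling it yields $A = 0$.

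There is essentially no obstacle here: this is a direct functional-equation specialization. The only points I would be careful to verify are that (i) the specialization $x = 1/2$ respects the hypothesis $x \neq 0, 1$, which is automatic for odd $p$, and (ii) the cancellation of $2$ on the final step is legitimate, which again uses only that $p$ is odd so that $2$ is a $p$-adic unit. No additional tools beyond Theorem \ref{MT-1} and elementary quadratic character values are required.
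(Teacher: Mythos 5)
Your proof is correct and follows exactly the paper's route: specialize Theorem \ref{MT-1} at $x=\tfrac{1}{2}$, where both arguments equal $2$, and use $\varphi(-2)=-1$ for $p\equiv 5,7\pmod 8$ to conclude the value vanishes. The extra care you take about cancelling the factor $2$ is harmless but unnecessary, since the values lie in a field of characteristic zero.
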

If we convert the ${_n}G_n$-function given in \eqref{spv-1} using Proposition \ref{prop-301} in Section 3,
then we have ${_2}F_1\left(
         \begin{array}{cc}
           \chi_4, & \chi_4^3 \\
           ~ & \varepsilon \\
         \end{array}|\frac{1}{2}
       \right)_p=0$ for $p\equiv 5\pmod{8}$ which also follows from \cite[Eq. (4.15)]{greene}. The value of ${_2G}_{2}\left[\begin{array}{cc}
           \frac{1}{4}, & \frac{3}{4} \\
           0, & 0
         \end{array}|2\right]_p$ can be deduced from \cite[Eq. (4.15)]{greene} when $p\equiv 1 \pmod{8}$. It would be interesting to know the value of
         ${_2G}_{2}\left[\begin{array}{cc}
           \frac{1}{4}, & \frac{3}{4} \\
           0, & 0
         \end{array}|2\right]_p$ when $p\equiv 3 \pmod{8}$.

\par The following transformation for classical hypergeometric series is a special case
of Clausen's famous classical identity \cite[p. 86, Eq. (4)]{bailey}.
\begin{align}\label{clausen}
{_3}F_2\left(\begin{array}{ccc}
           \frac{1}{2}, & \frac{1}{2}, &\frac{1}{2} \\
           &1,&1
         \end{array}|x\right)=(1-x)^{-1/2}~{_2}F_1\left(\begin{array}{cc}
           \frac{1}{4}, & \frac{3}{4} \\
           &1
         \end{array}|\frac{x}{x-1}\right)^2.
\end{align}
A finite field analogue of \eqref{clausen} was studied by Greene \cite[p. 94, Prop. 6.14]{greene2}. In \cite{EG}, Evans and Greene gave a finite field analogue of the
Clausen's classical identity.
We prove the following transformation for the ${_n}G_n$-function which can be described as a $p$-adic analogue of \eqref{clausen}.
Let $\delta$ be the function defined on $\mathbb{F}_q$ by $$\delta(x)=\left\{
                                             \begin{array}{ll}
                                               1, & \hbox{if $x=0$;} \\
                                               0, & \hbox{if $x\neq0$.}
                                             \end{array}
                                           \right.
$$
\begin{theorem}\label{MT-4}
Let $p$ be an odd prime and $x\in\mathbb{F}_p$. Then, for $x\neq 0, 1$, we have
\begin{align}
{_3G}_{3}\left[\begin{array}{ccc}
\frac{1}{2}, & \frac{1}{2}, & \frac{1}{2} \\
0, & 0, & 0
\end{array}|\frac{1}{x}
\right]_p &=\varphi(1-x)\cdot
{_2G}_{2}\left[\begin{array}{cc}
           \frac{1}{4}, & \frac{3}{4} \\
           0, & 0
         \end{array}|\frac{x-1}{x}\right]_p^2-p\cdot\varphi(1-x).\notag
\end{align}
\end{theorem}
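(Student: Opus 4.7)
The plan is to mirror the strategy outlined in the abstract and used for Theorem \ref{MT-1}: evaluate one concrete character sum in two essentially different ways, then match the two expressions to the two sides of the identity. Specifically, using the Gross-Koblitz formula I would first rewrite the single factor ${_2G_2}\bigl[\tfrac{1}{4},\tfrac{3}{4};0,0\mid\tfrac{x-1}{x}\bigr]_p$ as a character sum $\sum_{t\in\mathbb{F}_p}\Phi(t,x)$ whose summand is a product of $p$-adic gamma factors evaluated at arguments involving $t$ and $x$. Squaring this and multiplying by $\varphi(1-x)$ converts the right-hand side of Theorem \ref{MT-4} (up to the $-p\,\varphi(1-x)$ correction) into a double sum $\sum_{s,t}\varphi(1-x)\Phi(s,x)\Phi(t,x)$ over $\mathbb{F}_p\times\mathbb{F}_p$.

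Next, I would perform the change of variables that is the $p$-adic counterpart of the Beta-integral derivation of Clausen's classical identity. Classically one substitutes so that the product of two Euler integrals for ${_2F_1}(1/4,3/4;1\mid\cdot)$ becomes a single Euler-type integral for ${_3F_2}(1/2,1/2,1/2;1,1\mid\cdot)$; the $p$-adic analogue requires invoking the quadratic duplication formula for $\Gamma_p$ to merge each pair $\Gamma_p(1/4+\cdot)\Gamma_p(3/4+\cdot)$ into a single $\Gamma_p(1/2+\cdot)$, and then identifying the resulting single character sum, via Gross-Koblitz in reverse, with $\varphi(1-x)\cdot{_3G_3}\bigl[\tfrac{1}{2},\tfrac{1}{2},\tfrac{1}{2};0,0,0\mid\tfrac{1}{x}\bigr]_p$. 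The substitution fails on a small degenerate locus, and I would evaluate the corresponding boundary contribution separately and check that it equals exactly $p\,\varphi(1-x)$, accounting for the correction term in the statement.

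The main obstacle is carrying out the $p$-adic/character-sum analogue of the Beta-integral manipulation uniformly in $p$. For $p\equiv 1\pmod 4$ the argument should reduce, after applying Proposition \ref{prop-301}, to Greene's finite-field Clausen identity \cite[Prop.~6.14]{greene2} or the version of Evans-Greene \cite{EG}. For $p\equiv 3\pmod 4$, however, no quartic character $\chi_4$ exists on $\mathbb{F}_p$ and the argument must be run intrinsically within the $p$-adic gamma function formalism; the whole point of the ${_2G_2}$ reformulation is precisely to allow this. Uniformly controlling the signs coming from $\Gamma_p(1/2)^2=-\varphi(-1)$ and from the reflection values $\Gamma_p(1/4)\Gamma_p(3/4)$ (which equal $\pm 1$ depending on $p\bmod 4$), together with the precise identification of the boundary contribution as $-p\,\varphi(1-x)$, will be the technical core of the argument.
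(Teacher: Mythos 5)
Your overall instinct --- turn the ${_2G}_2$ into a genuine character sum over $\mathbb{F}_p$, square it, and compare with a character-sum expression for the ${_3G}_3$ --- is in the right spirit, and the first step is exactly Proposition \ref{prop-1}. But the heart of your plan, the reduction of the resulting double sum to a single sum, is where the gap lies, and your description of how it would go is not correct. Clausen's identity is not obtained by ``merging each pair $\Gamma_p(1/4+\cdot)\Gamma_p(3/4+\cdot)$ into $\Gamma_p(1/2+\cdot)$'' via a duplication formula: the multiplication formula (Lemma \ref{lemma3_1}, equivalently Davenport--Hasse under Gross--Koblitz) relates the pair $\{1/4,3/4\}$ at a \emph{single} summation index to a doubling or quadrupling of that index; it cannot by itself produce the three upper parameters $\frac12$ of the ${_3G}_3$ from the square of a ${_2G}_2$ with parameters $\frac14,\frac34$. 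What is actually needed is a nontrivial convolution identity for the double character sum $\sum_{s,t}\varphi(st)\varphi(1-2s+\lambda s^2)\varphi(1-2t+\lambda t^2)$, i.e., a finite-field Clausen theorem, and your sketch merely asserts that the classical Euler-integral manipulation carries over, without exhibiting the substitution or controlling the degenerate fibres. Moreover the correction term $-p\,\varphi(1-x)$ does not arise as a ``boundary contribution'' of such a substitution: in the paper it falls out of elementary algebra, as a cross term in the square.

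The paper sidesteps all of this by importing the hard content from Greene and Stanton \cite[Eq. 4.5]{GS}: setting $f(u)=\frac{p}{p-1}\sum_{\chi}{\varphi\chi^2\choose\chi}{\varphi\chi\choose\chi}\chi(u/4)$, their evaluation reads $\varphi((1-u)/u)\,{_3}F_2(\varphi,\varphi,\varphi;\varepsilon,\varepsilon\mid u/(u-1))_p=\varphi(u)f(u)^2+2\varphi(-1)f(u)/p-\cdots$, an identity involving only the quadratic character and hence valid for every odd $p$. One then substitutes $f(u)=-\varphi(-u)/p-\frac{1}{p}\,{_2G}_2[\cdots\mid 1/u]_p$ from Proposition \ref{prop-1}, converts the ${_3F}_2$ via \eqref{tr3}, and the $-p\,\varphi(1-x)$ emerges from the cross terms upon putting $u=x/(x-1)$. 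This also shows that your worry about $p\equiv 3\pmod 4$ is misplaced: no quartic character ever enters, so there is nothing to ``run intrinsically'' --- the bridge between the parameters $\frac14,\frac34$ and quadratic-character sums is already furnished by Proposition \ref{prop-1} for all odd $p$. To make your argument complete you would have to either reprove the Greene--Stanton evaluation (or the Evans--Greene finite-field Clausen theorem \cite{EG}) or cite it; as written, that entire step is left unproved.
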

We also prove the following transformation using Theorem \ref{MT-1} and \cite[Thm. 4.16]{greene}.
\begin{theorem}\label{MT-3}
Let $p$ be an odd prime and $x\in\mathbb{F}_q$. Then, for $x\neq0, \pm 1$, we have
\begin{align}\label{eqn-MT-3}
{_2G}_{2}\left[\begin{array}{cc}
           \frac{1}{4}, & \frac{3}{4} \\
           0, & 0
         \end{array}|\frac{(1+x)^2}{(1-x)^2}\right]_q&=\varphi(-2)\varphi(1+x){_2G}_{2}\left[\begin{array}{cc}
           \frac{1}{2}, & \frac{1}{2} \\
           0, & 0
         \end{array}|x^{-1}\right]_q.
\end{align}
\end{theorem}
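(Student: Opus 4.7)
The strategy is to chain Theorem \ref{MT-1} with the quadratic transformation \cite[Thm.~4.16]{greene}. First, I would apply Theorem \ref{MT-1} with the substitution $t=(1-x)^2/(1+x)^2$, noting that then $1/t=(1+x)^2/(1-x)^2$ and $1/(1-t)=(1+x)^2/(4x)$. Theorem \ref{MT-1} immediately converts the left-hand side of \eqref{eqn-MT-3} into
\begin{align}
\varphi(-2)\,{_2G}_{2}\left[\begin{array}{cc}\tfrac14,&\tfrac34\\0,&0\end{array}\bigg|\frac{(1+x)^2}{4x}\right]_{q}.\notag
\end{align}
Thus the theorem reduces to establishing
\begin{align}\label{reduced1}
{_2G}_{2}\left[\begin{array}{cc}\tfrac14,&\tfrac34\\0,&0\end{array}\bigg|\frac{(1+x)^2}{4x}\right]_{q}=\varphi(1+x)\,{_2G}_{2}\left[\begin{array}{cc}\tfrac12,&\tfrac12\\0,&0\end{array}\bigg|x^{-1}\right]_{q}.
\end{align}

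For \eqref{reduced1} the plan is to pass through the finite field formalism. Using Proposition~\ref{prop-301} from Section~3, the left-hand side translates into a finite field ${_2F}_1(\chi_4,\chi_4^3;\varepsilon\mid\cdot)_q$ and the right-hand side into a ${_2F}_1(\varphi,\varphi;\varepsilon\mid\cdot)_q$. These two ${_2F}_1$'s are linked by the quadratic transformation of \cite[Thm.~4.16]{greene}: evaluated at the arguments $(1+x)^2/(4x)$ and $x^{-1}$, Greene's identity yields precisely the character factor $\varphi(1+x)$. Pulling the resulting equality back through Proposition~\ref{prop-301} then gives \eqref{reduced1}.

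The main technical task is bookkeeping. On the one hand, one needs to verify that the normalisation factors supplied by Proposition~\ref{prop-301} on each side combine with the character factor from Greene's theorem to produce exactly $\varphi(1+x)$, and that the argument substitutions match up as claimed. On the other hand, an application of \cite[Thm.~4.16]{greene} a priori presupposes that a character of order $4$ exists on $\mathbb{F}_q^{\times}$, i.e.\ $q\equiv 1\pmod 4$; since the ${_n}G_n$-function is defined purely in terms of the $p$-adic gamma function and does not require such a character, the case $q\equiv 3\pmod 4$ should be handled separately, for instance by a suitable extension-of-scalars argument.
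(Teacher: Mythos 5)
Your opening step is sound and matches the paper: applying Theorem \ref{MT-1} with $t=(1-x)^2/(1+x)^2$ correctly trades the argument $(1+x)^2/(1-x)^2$ for $(1+x)^2/(4x)$ at the cost of a factor $\varphi(-2)$ (in the paper this same application of Theorem \ref{MT-1} is the passage from \eqref{eq-6.2} to \eqref{eq-6.3}), and your reduced identity is an equivalent reformulation of the theorem. The gap is in how you propose to prove that reduced identity. Converting the ${_2G}_2$ series with parameters $\frac14,\frac34$ into ${_2}F_1(\chi_4,\chi_4^3;\varepsilon\mid\cdot)_q$ via \eqref{tr1} presupposes a character of order $4$ on $\mathbb{F}_q^{\times}$, i.e.\ $q\equiv 1\pmod 4$; Proposition \ref{prop-301} states this restriction explicitly for \eqref{tr1}. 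You flag the issue, but the ``extension-of-scalars argument'' you gesture at for $q\equiv 3\pmod 4$ is not available: there is no descent formula relating ${_nG}_n[\cdots]_{q^2}$ to ${_nG}_n[\cdots]_q$, and the paper's introduction explicitly identifies the extension of such congruence-restricted finite-field identities to all primes as the non-trivial content of results of this type. For $q\equiv 1\pmod 4$ your argument essentially reproves Theorem \ref{MT-2}, which the paper \emph{deduces from} Theorem \ref{MT-3} rather than the other way around; so your route establishes the easy half and leaves the substantive half open.

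What the paper does instead avoids $\chi_4$ entirely. It starts from Greene's theorem with $A=B=\varphi$, which expresses ${_2}F_1(\varphi,\varphi;\varepsilon\mid x)_q$ in terms of the character sum $\sum_{\chi}{\varphi\chi^2\choose\chi}{\varphi\chi\choose\chi}\chi\left(\frac{x}{(1+x)^2}\right)$; only the quadratic character appears there, so the identity holds for every odd $q$. Proposition \ref{prop-1} --- proved with the Gross--Koblitz formula and the Davenport--Hasse relation rather than with any order-$4$ character --- then identifies this sum with the ${_2G}_2$ series with parameters $\frac14,\frac34$ at the argument $\frac{(1+x)^2}{4x}$, again for every odd $q$. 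That $\chi_4$-free bridge between the quadratic-character sum and the $\frac14,\frac34$ series is precisely what your proposal lacks; with it in hand, the remainder of your outline (Theorem \ref{MT-1} together with the conversion \eqref{tr2} on the $\varphi,\varphi$ side, which needs no congruence condition) goes through for all odd $q$.
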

The following transformation is a finite field analogue of \eqref{eqn-MT-3}.
\begin{theorem}\label{MT-2}
Let $p$ be an odd prime and $q=p^r$ for some $r\geq1$ such that $q\equiv1\pmod{4}$. Then, for $x\neq0, \pm 1$, we have
\begin{align}
{_2}F_1\left(
         \begin{array}{cc}
           \chi_4, & \chi_4^3 \\
           ~ & \varepsilon \\
         \end{array}|\frac{(1-x)^2}{(1+x)^2}
       \right)_q=\varphi(-2)\varphi(1+x){_2}F_1\left(
         \begin{array}{cc}
           \varphi, & \varphi \\
           ~ & \varepsilon \\
         \end{array}|x
       \right)_q.\notag
\end{align}
\end{theorem}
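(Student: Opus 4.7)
The plan is to deduce Theorem \ref{MT-2} directly from Theorem \ref{MT-3} via the dictionary between the $p$-adic hypergeometric series ${_n}G_n[\cdots]_q$ and Greene's finite field hypergeometric functions recorded in Proposition \ref{prop-301}. The hypothesis $q\equiv 1\pmod{4}$ is precisely what is needed so that a character $\chi_4$ of order $4$ exists on $\mathbb{F}_q^\times$, allowing the parameters $\tfrac{1}{4},\tfrac{3}{4}$ in the $p$-adic series to be matched with $\chi_4,\chi_4^3$ on the finite-field side.

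Concretely, I would set $y=\tfrac{(1-x)^2}{(1+x)^2}$ so that $\tfrac{1}{y}=\tfrac{(1+x)^2}{(1-x)^2}$, and apply Proposition \ref{prop-301} to rewrite the left-hand ${_2G}_2$ of Theorem \ref{MT-3} in terms of ${_2F}_1(\chi_4,\chi_4^3;\varepsilon\mid y)_q$, which is exactly the left side of Theorem \ref{MT-2}. Next, I would apply the same proposition with parameters $\tfrac{1}{2},\tfrac{1}{2}$ to the right-hand ${_2G}_2$ of Theorem \ref{MT-3} at argument $\tfrac{1}{x}$, obtaining ${_2F}_1(\varphi,\varphi;\varepsilon\mid x)_q$. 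Substituting these two translations into Theorem \ref{MT-3} should produce the asserted identity. The condition $x\neq 0,\pm 1$ is inherited from Theorem \ref{MT-3} and keeps both $y$ and $x$ in the valid range.

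The main obstacle is a careful bookkeeping of the possibly character-valued constants that Proposition \ref{prop-301} introduces when passing between the ${_n}G_n$ and the ${_n}F_n$ forms. I expect these to be of the form $\varphi$ (or some power of $\chi_4$) evaluated at the argument, which would cancel here since $y=\tfrac{(1-x)^2}{(1+x)^2}$ is a square in $\mathbb{F}_q^\times$, so its quadratic character is trivial, and $\chi_4(y)=\varphi(1-x)\varphi(1+x)$-type factors can be paired against the factor $\varphi(1+x)$ already present in Theorem \ref{MT-3}. Once these factors are verified to combine cleanly into the stated coefficient $\varphi(-2)\varphi(1+x)$, the proof is complete; no new analytic input beyond Theorem \ref{MT-3} and the conversion proposition is required.
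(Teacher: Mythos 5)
Your proposal is correct and is essentially the paper's own proof: the paper deduces Theorem \ref{MT-2} from \eqref{eqn-MT-3} by applying \eqref{tr1} and \eqref{tr2} of Proposition \ref{prop-301} to the two sides. The conversion factors you worry about are simply the constant $-q$ on each side (no character-valued factors arise in \eqref{tr1} or \eqref{tr2}), so they cancel immediately and no further bookkeeping is needed.
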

Using Theorem \ref{MT-3} and Theorem \ref{MT-2}, one can deduce many special values of the $p$-adic hypergeometric series as well as the finite field hypergeometric functions.
For example, we have the following special values of a ${_n}G_n$-function and its finite field analogue.
\begin{theorem}\label{MT-5} For any odd prime $p$, we have
        \begin{align*}
{_2}G_2\left[\begin{array}{cc}
           \frac{1}{4}, & \frac{3}{4} \\
           0, & 0
         \end{array}|9\right]_p=\left\{
                                             \begin{array}{ll}
                                               0, & \hbox{if $p\equiv 3\pmod{4}$;} \\
                                               -2x\varphi(6)(-1)^{\frac{x+y+1}{2}}, & \hbox{if $p\equiv 1\pmod{4}$, $x^2+y^2=p$, and $x$ odd.}
                                             \end{array}
                                           \right.
\end{align*}
For $p\equiv1\pmod{4}$, we have
       \begin{align*}
       {_2}F_1\left(
         \begin{array}{cc}
           \chi_4, & \chi_4^3 \\
           ~ & \varepsilon \\
         \end{array}|\frac{1}{9}\right)_p=\frac{2x\varphi(6)(-1)^{\frac{x+y+1}{2}}}{p},
       \end{align*}
       where $x^2+y^2=p$ and $x$ is odd.
\end{theorem}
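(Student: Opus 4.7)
The strategy, as signaled by the authors, is to apply Theorem \ref{MT-3} at a chosen value of $x$ so that $(1+x)^2/(1-x)^2 = 9$, and then to recognize the resulting ${_2G}_{2}$ at a simpler argument as a classical quantity. The finite field statement follows from Theorem \ref{MT-2} in parallel.

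Take $x = 1/2 \in \mathbb{F}_p$. Then $(1+x)^2/(1-x)^2 = 9$ and $x^{-1} = 2$, so Theorem \ref{MT-3} yields
\begin{align*}
{_2G}_{2}\left[\begin{array}{cc}\frac{1}{4}, & \frac{3}{4}\\ 0, & 0\end{array}|9\right]_p
&= \varphi(-2)\varphi(3/2) \cdot {_2G}_{2}\left[\begin{array}{cc}\frac{1}{2}, & \frac{1}{2}\\ 0, & 0\end{array}|2\right]_p\\
&= \varphi(-3) \cdot {_2G}_{2}\left[\begin{array}{cc}\frac{1}{2}, & \frac{1}{2}\\ 0, & 0\end{array}|2\right]_p,
\end{align*}
using $\varphi(3/2) = \varphi(6)$ (since $\varphi^2 = \varepsilon$) and $\varphi(-2)\varphi(6) = \varphi(-12) = \varphi(-3)$. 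The task is thus reduced to evaluating ${_2G}_{2}[1/2,1/2;0,0|2]_p$.

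By Proposition \ref{prop-301}, ${_2G}_{2}[1/2,1/2;0,0|2]_p$ is, up to an explicit character prefactor and a power of $p$, Greene's finite field hypergeometric function ${_2F}_{1}(\varphi,\varphi;\varepsilon|1/2)_p$. The latter is classically the normalized trace of Frobenius of the Legendre curve $y^2 = x(x-1)(x-1/2)$; the substitution $x \mapsto x + 1/2$ converts this to $y^2 = x^3 - x/4$, which is $\mathbb{F}_p$-isogenous to the CM elliptic curve $y^2 = x^3 - x$ with endomorphism ring $\mathbb{Z}[i]$. For $p \equiv 3 \pmod 4$ this curve is supersingular, so its trace of Frobenius vanishes; the ${_2G}_{2}$ at $9$ vanishes too, giving the first case. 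For $p \equiv 1 \pmod 4$, writing $p = x^2 + y^2$ with $x$ odd, the classical theorem of Gauss--Jacobsthal evaluates the trace as $\pm 2x$, with the sign precisely $(-1)^{(x+y+1)/2}$ under the standard normalization. Combined with the $\varphi(-3)$ and with the $\varphi(2)$ emerging from Proposition \ref{prop-301}, the prefactor $\varphi(6)$ appears and the asserted value follows.

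The finite field statement is handled in parallel. Specializing Theorem \ref{MT-2} at $x = 1/2$ (valid since $p \equiv 1 \pmod 4$ forces $q - 1 \equiv 0 \pmod 4$) gives
\begin{align*}
{_2F}_{1}\left(\begin{array}{cc}\chi_4, & \chi_4^3\\ ~ & \varepsilon\end{array}|\frac{1}{9}\right)_p = \varphi(-3)\cdot{_2F}_{1}(\varphi,\varphi;\varepsilon|1/2)_p,
\end{align*}
and substituting the Gauss--Jacobsthal evaluation of the right-hand side yields the second formula, with the $1/p$ coming directly from Greene's $1/q$ normalization of each binomial coefficient. The main obstacle throughout is sign bookkeeping: (i) the explicit character prefactor in Proposition \ref{prop-301} must deliver precisely the missing $\varphi(2)$, and (ii) Gauss's normalization of the representative $(x,y)$ with $p = x^2 + y^2$, $x$ odd, must be reconciled with the parity condition of the statement so that the sign $(-1)^{(x+y+1)/2}$ emerges unambiguously.
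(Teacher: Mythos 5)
Your skeleton is the same as the paper's: reduce the left-hand side to Greene's ${_2F}_1(\varphi,\varphi;\varepsilon\,|\,\lambda)_p$ at a point $\lambda\in\{-1,\tfrac12,2\}$ via Theorem \ref{MT-3} (equivalently via \eqref{eq-6.4-1}), then invoke the classical CM evaluation; the paper simply quotes Ono's Theorem 2 \cite{ono} instead of re-deriving it from $y^2=x^3-x$, and obtains the second formula by converting the first through \eqref{tr1} rather than re-running Theorem \ref{MT-2}. However, your character bookkeeping --- which you yourself flag as the main obstacle and then do not carry out --- contains two concrete errors that do not cancel as written. First, $y^2=x^3-\tfrac{x}{4}$ is the quadratic twist of $y^2=x^3-x$ by $2$, not an $\mathbb{F}_p$-isogenous curve: the traces differ by the factor $\varphi(2)$, so for $p\equiv 5\pmod 8$ they are negatives of one another and the curves are not isogenous at all. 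Consequently ${_2F}_1(\varphi,\varphi;\varepsilon\,|\,\tfrac12)_p=\varphi(2)\cdot\frac{2x(-1)^{(x+y+1)/2}}{p}$, not $\frac{2x(-1)^{(x+y+1)/2}}{p}$. Second, the $\varphi(2)$ you expect to ``emerge from Proposition \ref{prop-301}'' does not exist: \eqref{tr2} reads ${_2G}_{2}[\cdots|x]_q=-q\cdot{_2F}_1(\cdots|1/x)_q$ with no character prefactor. Taken literally, your argument therefore produces $-2x\varphi(3)(-1)^{(x+y+1)/2}$ rather than $-2x\varphi(6)(-1)^{(x+y+1)/2}$; at $p=5$, where $\varphi(2)=-1$, your value would be $+2$ while the correct value is $-2$. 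The same $\varphi(2)$ is missing from your treatment of the ${_2F}_1(\chi_4,\chi_4^3;\varepsilon\,|\,\tfrac19)_p$ formula.

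The repair is either (i) to keep $x=\tfrac12$ but track the twist honestly, so that the missing $\varphi(2)$ is supplied by the passage from the Legendre curve at $\lambda=\tfrac12$ to $y^2=x^3-x$, or (ii) to do what the paper does and substitute $x=2$ into \eqref{eq-6.4-1}: the Legendre curve $y^2=x(x-1)(x-2)$ is isomorphic over $\mathbb{F}_p$ (via $x\mapsto x+1$) to $y^2=x^3-x$, no twist intervenes, and the prefactor $-p\,\varphi(-2)\varphi(3)=-p\,\varphi(-6)$ delivers $\varphi(6)$ directly when $p\equiv1\pmod4$. Your $p\equiv3\pmod4$ case is fine as stated, since supersingularity (hence vanishing of the trace) is twist-invariant.
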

We also find special values of the following ${_n}G_n$-function.
\begin{theorem}\label{MT-7}
For $q\equiv 1\pmod{8}$ we have
       \begin{align}\label{eq-8.5}
       {_2}G_2\left[
         \begin{array}{cc}
           \frac{1}{4}, & \frac{3}{4} \\
           0, & 0 \\
         \end{array}|\left(\frac{6\sqrt{2}\pm3}{-2\sqrt{2}\pm3}\right)^2
       \right]_q=-q\varphi(6\pm 12\sqrt{2})
       \left\{{\chi_4 \choose \varphi}+{\chi_4^3 \choose \varphi}\right\}.
       \end{align}
      For $q\equiv 11\pmod{12}$ we have
       \begin{align}\label{eq-8.6}
       {_2}G_2\left[
         \begin{array}{cc}
           \frac{1}{4}, & \frac{3}{4} \\
           0, & 0 \\
         \end{array}|\left(\frac{6\pm\sqrt{3}}{-2\pm\sqrt{3}}\right)^2
       \right]_q=0.
\end{align}
For $q\equiv 1\pmod{12}$ we have
\begin{align}\label{eq-8.7}
{_2}G_2\left[
         \begin{array}{cc}
           \frac{1}{4}, & \frac{3}{4} \\
           0, & 0 \\
         \end{array}|\left(\frac{6\pm\sqrt{3}}{-2\pm\sqrt{3}}\right)^2
       \right]_q=-q\varphi\left(\frac{8\pm5\sqrt{3}}{12\pm 6\sqrt{3}}\right)
       \left\{{\varphi \choose \chi_3}+{\varphi \choose \chi_3^2}\right\}.
       \end{align}
\end{theorem}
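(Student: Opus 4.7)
The plan is to reduce each of the three identities to a special value of the simpler $p$-adic series ${_2}G_2[\tfrac{1}{2},\tfrac{1}{2};0,0|\,\cdot\,]_q$ via Theorem \ref{MT-3}, and then evaluate that reduced value using the complex multiplication of an associated Legendre elliptic curve, exactly as was done for Theorem \ref{MT-5}. First, I solve $(1+x)^2/(1-x)^2$ equal to the argument on the left-hand side: rationalizing $(6\sqrt{2}\pm 3)/(-2\sqrt{2}\pm 3)$ yields $x=-16\pm 12\sqrt{2}$ and $x^{-1}=(4\pm 3\sqrt{2})/8$, while rationalizing $(6\pm\sqrt{3})/(-2\pm\sqrt{3})$ yields $x=8\mp 4\sqrt{3}$ and $x^{-1}=(2\pm\sqrt{3})/4$. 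The congruence hypotheses in the theorem are precisely those needed for $\sqrt{2}$ (respectively $\sqrt{3}$) to lie in $\mathbb{F}_q$, and Theorem \ref{MT-3} then converts each left-hand side into $\varphi(-2)\varphi(1+x)\cdot{_2}G_2[\tfrac{1}{2},\tfrac{1}{2};0,0|x^{-1}]_q$, reducing the problem to the four special values ${_2}G_2[\tfrac{1}{2},\tfrac{1}{2};0,0|\lambda]_q$ with $\lambda\in\{(4\pm 3\sqrt{2})/8,\,(2\pm\sqrt{3})/4\}$.

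By Proposition \ref{prop-301}, each reduced ${_2}G_2$-value is, up to the standard twist $\varphi(\lambda(1-\lambda))$ and a factor of $-q$, the trace of Frobenius of the Legendre curve $E_\lambda\colon y^2=t(t-1)(t-\lambda)$. A direct $j$-invariant computation gives $j(E_\lambda)=287496$ when $\lambda=(4\pm 3\sqrt{2})/8$, which is the class-number-one CM $j$-invariant for the order $\mathbb{Z}[2i]\subset\mathbb{Q}(i)$ of discriminant $-16$, and gives $j(E_\lambda)=54000$ when $\lambda=(2\pm\sqrt{3})/4$, which is the CM $j$-invariant for $\mathbb{Z}[\sqrt{-3}]\subset\mathbb{Q}(\sqrt{-3})$ of discriminant $-12$. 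Writing $a_q(E_\lambda)$ through its Hecke character then produces a sum of two Jacobi sums at the two primes above $p$, which by the normalization \eqref{eq-0} unfolds to ${\chi_4 \choose \varphi}+{\chi_4^3 \choose \varphi}$ in the first case and to ${\varphi \choose \chi_3}+{\varphi \choose \chi_3^2}$ in the second, yielding \eqref{eq-8.5} and \eqref{eq-8.7} respectively. When $q\equiv 11\pmod{12}$ the prime $3$ is inert in $\mathbb{Q}(\sqrt{-3})$, so $E_\lambda$ is supersingular over $\mathbb{F}_q$, $a_q(E_\lambda)=0$, and \eqref{eq-8.6} follows.

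The hardest part will be the bookkeeping of the twist factor $\varphi(\cdot)$ on the right-hand side: after applying Theorem \ref{MT-3} one produces a prefactor $\varphi(-2)\varphi(1+x)$, whereas the stated identities carry $\varphi(6\pm 12\sqrt{2})$ and $\varphi((8\pm 5\sqrt{3})/(12\pm 6\sqrt{3}))$, so one must verify that the ratios of these prefactors against the $\varphi(\lambda(1-\lambda))$ twist contributed by the CM point count are squares in $\mathbb{F}_q$ under the stated congruences. This should follow from the explicit values $\lambda(1-\lambda)=-1/32$ and $\lambda(1-\lambda)=1/16$ together with $\varphi(-1)=1$ when $q\equiv 1\pmod{4}$; any residual factor of $\varphi(2)$ that the congruence fails to supply must be tracked through the Hecke character, where it cancels against a symmetric factor in the Jacobi-sum expression for $a_q$. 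Finally, the two choices of $\pm$ sign on the left-hand side are Galois-conjugate under $\sqrt{2}\mapsto-\sqrt{2}$ (respectively $\sqrt{3}\mapsto-\sqrt{3}$), so it suffices to verify one choice rigorously and transfer the other by conjugation.
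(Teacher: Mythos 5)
Your reduction step coincides with the paper's: solving $(1+x)^2/(1-x)^2$ for the stated arguments gives $x=\frac{4\sqrt{2}}{2\sqrt{2}\pm3}$ (i.e.\ $-16\pm12\sqrt{2}$, with $x^{-1}=(4\pm3\sqrt{2})/8$) and $x=\frac{4}{2\pm\sqrt{3}}$ (i.e.\ $8\mp4\sqrt{3}$, with $x^{-1}=(2\pm\sqrt{3})/4$), and \eqref{eq-6.4} (equivalently Theorem \ref{MT-3} combined with \eqref{tr2}) converts each left-hand side into $-q\,\varphi(-2)\varphi(1+x)\cdot{_2}F_1(\varphi,\varphi;\varepsilon\mid x)_q$. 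Where you diverge is in evaluating these reduced values: the paper simply quotes \cite[Thm.~1.1]{GK}, which supplies the evaluations of ${_2}F_1(\varphi,\varphi;\varepsilon\mid\cdot)_q$ at exactly these arguments in terms of ${\chi_4\choose\varphi}+{\chi_4^3\choose\varphi}$ and ${\varphi\choose\chi_3}+{\varphi\choose\chi_3^2}$, whereas you propose to re-derive them from the CM theory of the Legendre curve. Your supporting computations check out: for $\lambda=(4\pm3\sqrt{2})/8$ one has $\lambda(1-\lambda)=-1/32$ and $\lambda^2-\lambda+1=33/32$, giving $j=287496$ (discriminant $-16$), and for $\lambda=(2\pm\sqrt{3})/4$ one has $\lambda(1-\lambda)=1/16$ and $\lambda^2-\lambda+1=15/16$, giving $j=54000$ (discriminant $-12$); and for $q\equiv11\pmod{12}$ the prime $p$ is inert in $\mathbb{Q}(\sqrt{-3})$ with $r$ odd, so the trace vanishes and \eqref{eq-8.6} follows. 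This is a genuinely different, essentially self-contained route for the second half, and it is in substance how results like \cite[Thm.~1.1]{GK} are proved in the first place.

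The gap is in the final step, which you yourself flag. The $j$-invariant determines $E_\lambda$ only up to quadratic twist, so the Hecke-character/Jacobi-sum expression for $a_q(E_\lambda)$ carries an a priori undetermined factor $\varphi(u)$ recording which twist of the CM curve $E_\lambda$ actually is; the entire content of \eqref{eq-8.5} and \eqref{eq-8.7} is that this factor combines with $\varphi(-2)\varphi(1+x)$ to give precisely $\varphi(6\pm12\sqrt{2})$, respectively $\varphi\bigl(\frac{8\pm5\sqrt{3}}{12\pm6\sqrt{3}}\bigr)$. Your sketch only asserts that the residual factors ``must be tracked through the Hecke character'' and ``should follow''; nothing in it pins them down, and an error of a single factor of $\varphi(2)$ or a sign would falsify the stated identities. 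So as written this is a plausible outline whose decisive computation --- the one that the paper's citation of \cite{GK} replaces --- is not carried out.
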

The following theorem is a finite field analogue of Theorem \ref{MT-7}.
\begin{theorem}\label{MT-6}
For $q\equiv 1\pmod{8}$ we have
       \begin{align}\label{eq-8.3}
       {_2}F_1\left(
         \begin{array}{cc}
           \chi_4, & \chi_4^3 \\
           ~ & \varepsilon \\
         \end{array}|\left(\frac{-2\sqrt{2}\pm3}{6\sqrt{2}\pm3}\right)^2
       \right)_q=\varphi(6\pm 12\sqrt{2})
       \left\{{\chi_4 \choose \varphi}+{\chi_4^3 \choose \varphi}\right\}.
       \end{align}
For $q\equiv 1\pmod{12}$ we have
       \begin{align}\label{eq-8.4}
       {_2}F_1\left(
         \begin{array}{cc}
           \chi_4, & \chi_4^3 \\
           ~ & \varepsilon \\
         \end{array}|\left(\frac{-2\pm\sqrt{3}}{6\pm\sqrt{3}}\right)^2
       \right)_q=\varphi\left(\frac{8\pm5\sqrt{3}}{12\pm6\sqrt{3}}\right)
       \left\{{\varphi \choose \chi_3}+{\varphi \choose \chi_3^2}\right\}.
       \end{align}
\end{theorem}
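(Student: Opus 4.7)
\textbf{Proof plan for Theorem \ref{MT-6}.}
The arguments in \eqref{eq-8.3} and \eqref{eq-8.4} already have the shape $\left(\frac{1-x}{1+x}\right)^2$ appearing on the left-hand side of Theorem \ref{MT-2}. The plan is therefore to solve for the corresponding $x$, apply Theorem \ref{MT-2}, and then reduce the problem to a classical evaluation of ${_2}F_1(\varphi,\varphi;\varepsilon|x)_q$.

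Solving $\frac{1-x}{1+x}=\frac{-2\sqrt{2}\pm 3}{6\sqrt{2}\pm 3}$ gives $x=\frac{4\sqrt{2}}{2\sqrt{2}\pm 3}=-16\pm 12\sqrt{2}$, which lies in $\mathbb{F}_q$ under the hypothesis $q\equiv 1\pmod 8$; then $1+x=\frac{6\sqrt{2}\pm 3}{2\sqrt{2}\pm 3}$, and after rationalising one checks that $\varphi(-2)\varphi(1+x)=\varphi(6\pm 12\sqrt{2})$, using that $\varphi$ is trivial on nonzero squares. The analogous computation with $\frac{1-x}{1+x}=\frac{-2\pm\sqrt{3}}{6\pm\sqrt{3}}$ yields $x=8\mp 4\sqrt{3}$, requiring $q\equiv 1\pmod{12}$, and the prefactor becomes $\varphi\!\left(\frac{8\pm 5\sqrt{3}}{12\pm 6\sqrt{3}}\right)$. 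Consequently Theorem \ref{MT-2} reduces the proof to showing
\[
{_2}F_1\left(\begin{array}{cc}\varphi, & \varphi\\ & \varepsilon\end{array}|{-16\pm 12\sqrt{2}}\right)_q={\chi_4\choose\varphi}+{\chi_4^3\choose\varphi}
\]
when $q\equiv 1\pmod 8$, together with its analogue obtained by replacing $\chi_4,\chi_4^3$ with $\chi_3,\chi_3^2$ at $x=8\mp 4\sqrt{3}$ when $q\equiv 1\pmod{12}$.

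These last two evaluations are where the real work lies. The key algebraic observation is the highly constrained multiplicative structure of $1-x$ at these special points: $1-(-16\pm 12\sqrt{2})=17\mp 12\sqrt{2}=((\sqrt{2}\mp 1)^2)^2$ is a perfect fourth power, while $1-(8\mp 4\sqrt{3})=-(2\mp\sqrt{3})^2$ is $-1$ times a square. Expanding ${_2}F_1(\varphi,\varphi;\varepsilon|x)_q$ via Greene's definition as a sum of characters and substituting these expressions into the Jacobi-sum integrand in \eqref{eq-0}, character orthogonality should force every $\chi$ whose fourth power (respectively cube) is nontrivial to contribute zero, leaving only the terms indexed by $\chi\in\{\chi_4,\chi_4^3\}$ (respectively $\{\chi_3,\chi_3^2\}$), which reassemble into the stated binomial coefficients after absorbing a factor of $\varphi$. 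The main obstacle will be precisely this last step: correctly identifying which characters survive, matching the $\varphi$-signs for the two $\pm$ branches consistently, and handling the interplay of $\varphi(-1)$, $\varphi(2)$, $\varphi(3)$, and $\varphi(\sqrt{2}\mp 1)$ under the hypotheses $q\equiv 1\pmod 8$ and $q\equiv 1\pmod{12}$, both of which fortunately ensure $\varphi(-1)=1$.
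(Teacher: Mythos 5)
Your reduction via Theorem \ref{MT-2} is sound, and it is essentially the paper's own route: the paper first proves Theorem \ref{MT-7} by combining \eqref{eq-6.4} (the content of Theorem \ref{MT-2}) with the known evaluations of ${_2}F_1(\varphi,\varphi;\varepsilon\mid\cdot)_q$ at $x=\frac{4\sqrt{2}}{2\sqrt{2}\pm3}$ and $x=\frac{4}{2\pm\sqrt{3}}$ from \cite[Thm.~1.1]{GK}, and then converts via \eqref{tr1}. The genuine gap is exactly the step you flag as ``where the real work lies.'' Those two evaluations are not consequences of character orthogonality. Unwinding Greene's definition via \eqref{eq-0} turns ${_2}F_1(\varphi,\varphi;\varepsilon\mid x)_q$ into a single character sum $\frac{\varphi(-1)}{q}\sum_{y}\varphi(y)\varphi(1-y)\varphi(1-xy)$, i.e.\ (up to normalization) a point count on an elliptic curve; there is no residual summation over $\chi$ in which orthogonality could annihilate all characters except those of order $4$ (resp.\ $3$). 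The collapse of these particular point counts to ${\chi_4\choose\varphi}+{\chi_4^3\choose\varphi}$ and ${\varphi\choose\chi_3}+{\varphi\choose\chi_3^2}$ is a nontrivial theorem in its own right (proved in \cite{GK} via the arithmetic of specific algebraic curves), and your observation that $1-x$ is a fourth power, resp.\ $-1$ times a square, at these points is relevant but does not by itself yield it. As written, the proposal either needs to cite \cite[Thm.~1.1]{GK} or supply a genuine proof of these evaluations.

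There is also a concrete error in the $\sqrt{3}$ case: the prefactor $\varphi(-2)\varphi(1+x)$ coming from Theorem \ref{MT-2} does \emph{not} equal $\varphi\left(\frac{8\pm5\sqrt{3}}{12\pm6\sqrt{3}}\right)$. For instance, with $q=13$ and $\sqrt{3}=4$ one gets $x=\frac{4}{2+\sqrt{3}}=5$, so $\varphi(-2)\varphi(1+x)=\varphi(-12)=\varphi(1)=1$, whereas $\varphi\left(\frac{8+5\sqrt{3}}{12+6\sqrt{3}}\right)=\varphi(8)=\varphi(2)=-1$. The missing factor $\varphi(2)$ is precisely the nontrivial $\varphi$-prefactor that \cite[Thm.~1.1]{GK} carries at $x=\frac{4}{2\pm\sqrt{3}}$; in the $\sqrt{2}$ case the analogous factor is $\varphi(3\pm2\sqrt{2})=\varphi((\sqrt{2}\pm1)^2)=1$, which is why your bookkeeping happens to close up there. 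Consequently the reduced target identity you state for the $q\equiv1\pmod{12}$ case is off by a sign for infinitely many $q$ and cannot be proved as stated.
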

In section 3 we prove two character sum identities and then use them to prove Theorem \ref{MT-1}, Theorem \ref{MT-4}, and Theorem \ref{MT-3}. We also prove
Theorem \ref{MT-2} in section 3. In section 4 we prove  Theorem \ref{MT-5}, Theorem \ref{MT-7} and Theorem \ref{MT-6}.
\section{Notations and Preliminaries}
Let $\mathbb{Z}_p$ and $\mathbb{Q}_p$ denote the ring of $p$-adic integers and the field of $p$-adic numbers, respectively.
Let $\overline{\mathbb{Q}_p}$ be the algebraic closure of $\mathbb{Q}_p$ and $\mathbb{C}_p$ the completion of $\overline{\mathbb{Q}_p}$.
Let $\mathbb{Z}_q$ be the ring of integers in the unique unramified extension of $\mathbb{Q}_p$ with residue field $\mathbb{F}_q$.
We know that $\chi\in \widehat{\mathbb{F}_q^{\times}}$ takes values in $\mu_{q-1}$, where $\mu_{q-1}$ is the group of
$(q-1)$-th roots of unity in $\mathbb{C}^{\times}$. Since $\mathbb{Z}_q^{\times}$ contains all $(q-1)$-th roots of unity,
we can consider multiplicative characters on $\mathbb{F}_q^\times$
to be maps $\chi: \mathbb{F}_q^{\times} \rightarrow \mathbb{Z}_q^{\times}$.
Let $\omega: \mathbb{F}_q^\times \rightarrow \mathbb{Z}_q^{\times}$ be the Teichm\"{u}ller character.
For $a\in\mathbb{F}_q^\times$, the value $\omega(a)$ is just the $(q-1)$-th root of unity in $\mathbb{Z}_q$ such that $\omega(a)\equiv a \pmod{p}$.
\par We now introduce some properties of Gauss sums. For further details, see \cite{evans}. Let $\zeta_p$ be a fixed primitive $p$-th root of unity
in $\overline{\mathbb{Q}_p}$. The trace map $\text{tr}: \mathbb{F}_q \rightarrow \mathbb{F}_p$ is given by
\begin{align}
\text{tr}(\alpha)=\alpha + \alpha^p + \alpha^{p^2}+ \cdots + \alpha^{p^{r-1}}.\notag
\end{align}
For $\chi \in \widehat{\mathbb{F}_q^\times}$, the \emph{Gauss sum} is defined by
\begin{align}
g(\chi):=\sum\limits_{x\in \mathbb{F}_q}\chi(x)\zeta_p^{\text{tr}(x)}.\notag
\end{align}
Now, we will see some elementary properties of Gauss and Jacobi sums.
We let $T$ denote a fixed generator of $\widehat{\mathbb{F}_q^\times}$.
\begin{lemma}\emph{(\cite[Eq. 1.12]{greene}).}\label{lemma2_1}
If $k\in\mathbb{Z}$ and $T^k\neq\varepsilon$, then
$$g(T^k)g(T^{-k})=qT^k(-1).$$
\end{lemma}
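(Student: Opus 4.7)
The plan is to prove the identity $g(T^k)g(T^{-k}) = qT^k(-1)$ by expanding the product of the two Gauss sums directly, collecting terms by a multiplicative substitution, and then using orthogonality of additive characters together with the fact that the nontrivial multiplicative character $T^k$ has vanishing total sum over $\mathbb{F}_q^\times$.

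First, I would write out
\begin{align*}
g(T^k)g(T^{-k}) = \sum_{x,y \in \mathbb{F}_q^\times} T^k(x) T^{-k}(y) \zeta_p^{\mathrm{tr}(x+y)},
\end{align*}
then make the change of variable $x = yz$ (valid because $y \neq 0$), which turns the expression into $\sum_{y, z \in \mathbb{F}_q^\times} T^k(z) \zeta_p^{\mathrm{tr}(y(z+1))}$. Interchanging the order of summation isolates an inner sum in $y$ over $\mathbb{F}_q^\times$, parameterized by $z$, so the problem reduces to evaluating $\sum_{z \neq 0} T^k(z) S(z)$ where $S(z) = \sum_{y \neq 0} \zeta_p^{\mathrm{tr}(y(z+1))}$.

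The key observation is the dichotomy for $S(z)$: when $z = -1$ every term is $1$, giving $S(-1) = q-1$; when $z \neq -1$, the map $y \mapsto y(z+1)$ is a bijection on $\mathbb{F}_q$, so $\sum_{y \in \mathbb{F}_q} \zeta_p^{\mathrm{tr}(y(z+1))} = 0$ by standard orthogonality of the additive character $\psi(w) = \zeta_p^{\mathrm{tr}(w)}$, whence $S(z) = -1$. Substituting back, the expression splits as
\begin{align*}
g(T^k)g(T^{-k}) = (q-1)T^k(-1) \;-\; \sum_{z \in \mathbb{F}_q^\times, \; z \neq -1} T^k(z).
\end{align*}

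The last step uses the hypothesis $T^k \neq \varepsilon$: by nontriviality, $\sum_{z \in \mathbb{F}_q^\times} T^k(z) = 0$, so the restricted sum equals $-T^k(-1)$, and the whole expression collapses to $(q-1)T^k(-1) + T^k(-1) = qT^k(-1)$. There is no serious obstacle here; the only place where care is needed is in keeping track of the missing value $z = -1$ and in invoking the nontriviality hypothesis at exactly the right moment, since without it the final sum would not vanish and the identity would fail (indeed, for $T^k = \varepsilon$ one gets $g(\varepsilon)^2 = 1$ rather than $q$).
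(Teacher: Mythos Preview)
Your proof is correct and is the standard textbook argument for this well-known identity about Gauss sums. The paper itself does not supply a proof of this lemma; it is simply quoted from \cite[Eq.~1.12]{greene} as a preliminary fact, so there is no in-paper argument to compare against.
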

Let $\delta$ denote the function on multiplicative characters defined by
$$\delta(A)=\left\{
              \begin{array}{ll}
                1, & \hbox{if $A$ is the trivial character;} \\
                0, & \hbox{otherwise.}
              \end{array}
            \right.
$$
\begin{lemma}\emph{(\cite[Eq. 1.14]{greene}).}\label{lemma2_2} For $A,B\in\widehat{\mathbb{F}_q^{\times}}$ we have
\begin{align}
J(A,B)=\frac{g(A)g(B)}{g(AB)}+(q-1)B(-1)\delta(AB).\notag
\end{align}
\end{lemma}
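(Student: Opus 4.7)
The plan is to expand the product $g(A)g(B)$ as a double Gauss-sum, reindex via the substitution $z = x+y$, and split according to whether the new variable $z$ is zero or not. I expect this to yield a clean factored identity
$$g(A)g(B) = J(A,B)\, g(AB) + (q-1) B(-1) \delta(AB),$$
from which the lemma follows by dividing by $g(AB)$, with careful attention to the sign flip that occurs in the exceptional case $AB = \varepsilon$.

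First I would write
$$g(A)g(B) = \sum_{x,y \in \mathbb{F}_q} A(x)B(y) \zeta_p^{\operatorname{tr}(x+y)},$$
where only $x,y \in \mathbb{F}_q^\times$ contribute since $A(0)=B(0)=0$. Setting $z := x+y$ and grouping terms by the value of $z$, I split the sum into the contribution from $z=0$ and that from $z \neq 0$. For $z = 0$ we have $y = -x$, and the inner sum collapses to $B(-1) \sum_{x \neq 0} (AB)(x)$, which vanishes unless $AB = \varepsilon$, in which case it equals $(q-1)B(-1)$; this produces exactly the correction term $(q-1)B(-1)\delta(AB)$. For $z \neq 0$, the substitution $x = zt$ factors the inner expression as $(AB)(z)\zeta_p^{\operatorname{tr}(z)} \cdot \sum_{t} A(t)B(1-t)$, and the $t$-sum is exactly $J(A,B)$.

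Next I would recognize that $\sum_{z \neq 0}(AB)(z)\zeta_p^{\operatorname{tr}(z)} = g(AB)$ in all cases: when $AB \neq \varepsilon$ this is immediate from the definition of the Gauss sum, while when $AB = \varepsilon$ the convention $\varepsilon(0) = 0$ forces $g(\varepsilon) = \sum_{z \neq 0}\zeta_p^{\operatorname{tr}(z)} = -1$, which agrees with the sum above. Assembling the two parts I obtain the factored identity displayed at the start of the plan. Dividing both sides by $g(AB)$ then gives
$$J(A,B) = \frac{g(A)g(B)}{g(AB)} - \frac{(q-1)B(-1)\delta(AB)}{g(AB)}.$$
The only subtle step — really a bookkeeping matter rather than a genuine obstacle — is the sign in the last term: the factor $\delta(AB)$ forces us to evaluate $1/g(AB)$ only when $AB = \varepsilon$, where $g(AB) = -1$, so the minus sign becomes a plus and we recover exactly the form stated in the lemma.
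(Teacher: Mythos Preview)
Your proof is correct and is the standard derivation of this identity relating Jacobi sums to Gauss sums. The paper does not supply its own proof of this lemma; it is quoted as Eq.~(1.14) from Greene's paper \cite{greene}, so there is nothing to compare against.
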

The following are character sum analogues of the binomial theorem \cite{greene}.
For any $A\in\widehat{\mathbb{F}_q^{\times}}$ and $x\in\mathbb{F}_q$ we have
\begin{align}\label{lemma2_3}
\overline{A}(1-x)=\delta(x)+\frac{q}{q-1}\sum_{\chi\in\widehat{\mathbb{F}_q^{\times}}}{A\chi \choose \chi}\chi(x),
\end{align}
\begin{align}\label{lemma2_4}
A(1+x)=\delta(x)+\frac{q}{q-1}\sum_{\chi\in\widehat{\mathbb{F}_q^{\times}}}{A\choose \chi}\chi(x).
\end{align}
We recall some properties of the binomial coefficients from \cite{greene}. We have
\begin{align}\label{eq-1}
{A\choose B}={A\choose A\overline{B}},
\end{align}
\begin{align}\label{eq-2}
{A\choose \varepsilon}={A\choose A}=\frac{-1}{q}+\frac{q-1}{q}\delta(A).
\end{align}
\begin{theorem}\emph{(\cite[Davenport-Hasse Relation]{evans}).}\label{thm2_2}
Let $m$ be a positive integer and let $q=p^r$ be a prime power such that $q\equiv 1 \pmod{m}$. For multiplicative characters
$\chi, \psi \in \widehat{\mathbb{F}_q^\times}$, we have
\begin{align}
\prod\limits_{\chi^m=\varepsilon}g(\chi \psi)=-g(\psi^m)\psi(m^{-m})\prod\limits_{\chi^m=\varepsilon}g(\chi).\notag
\end{align}
\end{theorem}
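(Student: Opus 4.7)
My plan is to prove this classical Hasse--Davenport product relation by directly manipulating the character sums defining the Gauss sums, using the Jacobi sum recursion of Lemma \ref{lemma2_2} as the main tool. The result is cited here from \cite{evans}, and the argument I describe is essentially the standard textbook derivation.

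First I would expand the left-hand side by repeatedly applying $g(A)g(B) = J(A,B)g(AB)$, valid whenever $AB \neq \varepsilon$ (this is Lemma \ref{lemma2_2} with the delta term absent). Writing the characters of order dividing $m$ as $\varepsilon, \rho, \ldots, \rho^{m-1}$ for a generator $\rho$, and noting that $\prod_{j=0}^{m-1} \rho^j \psi = \psi^m$ (since $\prod_j \rho^j$ lies in the order-dividing-$m$ group and $\rho^0\rho^1\cdots\rho^{m-1}$ pairs off appropriately), the iterative collapse produces $g(\psi^m)$ multiplied by a product of Jacobi sums in the characters $\rho^j$ and $\rho^j \psi$. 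The $g(\psi^m)$ factor is thus accounted for from the structural side.

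Next I would evaluate the resulting product of Jacobi sums. Using $J(A,B) = \sum_x A(x) B(1-x)$ together with the character orthogonality
$$\sum_{\chi^m = \varepsilon} \chi(x) = m \cdot \mathbf{1}_{x \in (\mathbb{F}_q^\times)^m},$$
the combined inner sum reduces to a sum over $m$-th powers. Substituting $x = y^m$ and tracking the normalization then yields precisely the factor $\psi(m^{-m})$, coming from the rescaling by $m$ inherent in Gauss's multiplication-type identities (this is the finite field analogue of the Gauss multiplication formula for the gamma function). The remaining product of Jacobi sums telescopes, via Lemma \ref{lemma2_2} again, into $\prod_{\chi^m = \varepsilon,\, \chi \neq \varepsilon} g(\chi)$, and the missing exceptional factor $g(\varepsilon) = -1$ accounts for the overall sign.

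The main obstacle is the careful bookkeeping required when intermediate characters $AB$ in the Jacobi sum recursion become trivial, since in that case Lemma \ref{lemma2_2} contributes an extra delta term that must be tracked through the iteration. This delicate case analysis, together with the rescaling by $m$ in the reduction to $m$-th powers, is what ultimately produces both the sign and the factor $\psi(m^{-m})$ in the stated identity. Since the identity is a well-known classical theorem of Hasse and Davenport, in practice I would, as the authors do, defer to the detailed treatment in \cite{evans}.
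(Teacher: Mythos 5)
The paper offers no proof of this statement at all: it is quoted verbatim as a classical theorem with a citation to \cite{evans}, so your closing move of deferring to that reference is in fact exactly what the authors do. Judged as a proof sketch in its own right, however, your outline has a concrete gap at its first step. You claim that $\prod_{j=0}^{m-1}\rho^j\psi=\psi^m$ because the characters of order dividing $m$ ``pair off.'' This is true only for odd $m$: when $m$ is even the factor $\rho^{m/2}$ has no partner, and $\prod_{j=0}^{m-1}\rho^j=\rho^{m(m-1)/2}=\rho^{m/2}=\varphi$ is the quadratic character. Consequently the iterated application of $g(A)g(B)=J(A,B)g(AB)$ terminates at $g(\varphi\psi^m)$, not at $g(\psi^m)$, and converting $g(\varphi\psi^m)$ into $\psi^{-m}(\mathrm{const})\,g(\psi^m)g(\varphi)$ is essentially the theorem itself (for $m=2$ it \emph{is} the duplication formula), so the argument becomes circular precisely where the content lies. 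Note that $m=2$ is the only case of the theorem actually invoked later in the paper (in the proofs of Propositions 3.1 and 3.2), so this is not a removable edge case.

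The second half of the sketch is also more of a wish than an argument: the assertion that the residual product of Jacobi sums, after inserting orthogonality $\sum_{\chi^m=\varepsilon}\chi(x)=m\cdot\mathbf{1}_{x\in(\mathbb{F}_q^{\times})^m}$ and substituting $x=y^m$, ``yields precisely $\psi(m^{-m})$'' and ``telescopes into $\prod_{\chi\neq\varepsilon}g(\chi)$'' is exactly the hard part, and no computation is offered that produces either factor. (The sign bookkeeping via $g(\varepsilon)=-1$ is fine.) The standard derivations in \cite{evans} do not proceed by naive telescoping; for $m=2$ one evaluates $J(\psi,\psi)$ by the substitution $x\mapsto(1+u)/2$ to extract $\overline{\psi}^{\,2}(4)$, and the general case is handled either by an induction built on such evaluations or via Stickelberger/Gross--Koblitz. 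As a citation of a known theorem your write-up is adequate; as a proof it is not yet one.
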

Now, we recall the $p$-adic gamma function. For further details, see \cite{kob}.
For a positive integer $n$,
the $p$-adic gamma function $\Gamma_p(n)$ is defined as
\begin{align}
\Gamma_p(n):=(-1)^n\prod\limits_{0<j<n,p\nmid j}j\notag
\end{align}
and one extends it to all $x\in\mathbb{Z}_p$ by setting $\Gamma_p(0):=1$ and
\begin{align}
\Gamma_p(x):=\lim_{x_n\rightarrow x}\Gamma_p(x_n)\notag
\end{align}
for $x\neq0$, where $x_n$ runs through any sequence of positive integers $p$-adically approaching $x$.
This limit exists, is independent of how $x_n$ approaches $x$,
and determines a continuous function on $\mathbb{Z}_p$ with values in $\mathbb{Z}_p^{\times}$.
For $x \in \mathbb{Q}$ we let $\lfloor x\rfloor$ denote the greatest integer less than
or equal to $x$ and $\langle x\rangle$ denote the fractional part of $x$, i.e., $x-\lfloor x\rfloor$, satisfying $0\leq\langle x\rangle<1$.
We now recall the McCarthy's $p$-adic hypergeometric series $_{n}G_{n}[\cdots]$
as follows.
\begin{definition}\cite[Definition 5.1]{mccarthy2} \label{defin1}
Let $p$ be an odd prime and $q=p^r$, $r\geq 1$. Let $t \in \mathbb{F}_q$.
For positive integer $n$ and $1\leq k\leq n$, let $a_k$, $b_k$ $\in \mathbb{Q}\cap \mathbb{Z}_p$.
Then the function $_{n}G_{n}[\cdots]$ is defined by
\begin{align}
&_nG_n\left[\begin{array}{cccc}
             a_1, & a_2, & \ldots, & a_n \\
             b_1, & b_2, & \ldots, & b_n
           \end{array}|t
 \right]_q:=\frac{-1}{q-1}\sum_{a=0}^{q-2}(-1)^{an}~~\overline{\omega}^a(t)\notag\\
&\times \prod\limits_{k=1}^n\prod\limits_{i=0}^{r-1}(-p)^{-\lfloor \langle a_kp^i \rangle-\frac{ap^i}{q-1} \rfloor -\lfloor\langle -b_kp^i \rangle +\frac{ap^i}{q-1}\rfloor}
 \frac{\Gamma_p(\langle (a_k-\frac{a}{q-1})p^i\rangle)}{\Gamma_p(\langle a_kp^i \rangle)}
 \frac{\Gamma_p(\langle (-b_k+\frac{a}{q-1})p^i \rangle)}{\Gamma_p(\langle -b_kp^i \rangle)}.\notag
\end{align}
\end{definition}
Let $\pi \in \mathbb{C}_p$ be the fixed root of $x^{p-1} + p=0$ which satisfies
$\pi \equiv \zeta_p-1 \pmod{(\zeta_p-1)^2}$. Then the Gross-Koblitz formula relates Gauss sums and the $p$-adic gamma function as follows.
\begin{theorem}\emph{(\cite[Gross-Koblitz]{gross}).}\label{thm2_3} For $a\in \mathbb{Z}$ and $q=p^r$,
\begin{align}
g(\overline{\omega}^a)=-\pi^{(p-1)\sum\limits_{i=0}^{r-1}\langle\frac{ap^i}{q-1} \rangle}\prod\limits_{i=0}^{r-1}\Gamma_p\left(\langle \frac{ap^i}{q-1} \rangle\right).\notag
\end{align}
\end{theorem}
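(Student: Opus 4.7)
My plan is to prove the Gross--Koblitz formula by constructing a $p$-adic analytic model for the Gauss sum via Dwork's splitting function and then identifying its Taylor coefficients with values of the $p$-adic gamma function.

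First, I would recall Dwork's splitting function $\theta(T)=\exp\bigl(\pi(T-T^p)\bigr)$, which lies in $1+\pi T\,\mathbb{Z}_p[[T]]$, converges on a disk of radius strictly greater than $1$ by Dwork's lemma, and, thanks to the normalization $\pi\equiv\zeta_p-1\pmod{(\zeta_p-1)^2}$, satisfies $\theta(\hat y)=\zeta_p^{y}$ for the Teichm\"uller lift $\hat y$ of any $y\in\mathbb{F}_p$. To pass from $\mathbb{F}_p$ to $\mathbb{F}_q$, I set $F(T)=\prod_{i=0}^{r-1}\theta(T^{p^i})$; since the Teichm\"uller lift commutes with Frobenius, $F(\hat x)=\zeta_p^{\mathrm{tr}(x)}$ for every $x\in\mathbb{F}_q$. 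Identifying $\mathbb{F}_q^{\times}$ with $\mu_{q-1}\subset\mathbb{Z}_q^{\times}$ via $\omega$ and expanding $F(T)=\sum_{n\ge 0}C_nT^n$, the Gauss sum collapses by orthogonality of characters on $\mu_{q-1}$ to
\begin{align*}
g(\overline{\omega}^a)=\sum_{\hat x\in\mu_{q-1}}\hat x^{-a}\,F(\hat x)=(q-1)\sum_{n\equiv a\,(\bmod\,q-1)}C_n.
\end{align*}

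Next, I would compute $C_a$ (or $C_{q-1}$ when $a=0$) explicitly. From the factorization $\theta(T)=\exp(\pi T)\exp(-\pi T^p)$ one reads off the Taylor coefficients of $\theta$ as $\alpha_m=\sum_{k+\ell p=m}\pi^{k+\ell}(-1)^\ell/(k!\,\ell!)$; dividing by $\pi^{s(m)}$, where $s(m)$ is the base-$p$ digit sum of $m$, and using the Morita relation $\Gamma_p(x+1)=-x\,\Gamma_p(x)$ to absorb the non-$p$-part of $m!$ identifies the unit part of $\alpha_m$ with a product of $\Gamma_p$-values indexed by the base-$p$ digits of $m$. Multiplying across the $r$ factors of $F$ and matching the base-$p$ expansion of the residue of $a$ modulo $q-1$, and using that multiplication by $p$ cyclically permutes these digits and that $\sum_{i=0}^{r-1}\langle ap^i/(q-1)\rangle=s(a)/(p-1)$, one obtains the product $\prod_{i=0}^{r-1}\Gamma_p(\langle ap^i/(q-1)\rangle)$ together with the claimed power $(p-1)\sum_i\langle ap^i/(q-1)\rangle$ of $\pi$; the overall sign $-1$ is pinned down by reconciling the alternating sign in the Morita recurrence with the leading factor $q-1$.

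The main obstacle will be controlling the tail of $\sum_{n\equiv a}C_n$: a priori many indices $n\equiv a\pmod{q-1}$ could share the minimal $\pi$-adic valuation $s(a)$, so one cannot simply drop the higher-index contributions. This is handled by Dwork's sharper bound on $v_\pi(\alpha_m)$, which arises from the correction factor $\exp(-\pi T^p)$ in $\theta$ and forces equality $v_\pi(\alpha_m)=s(m)$ to be attained only by highly constrained $m$; this in turn shows that among $n\equiv a\pmod{q-1}$ the index $n=a$ uniquely realizes the minimum valuation and that all other contributions lie in strictly higher powers of $\pi$. Combined with the exact evaluation of $C_a$ above, this yields the Gross--Koblitz identity precisely as stated.
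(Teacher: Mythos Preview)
The paper does not prove this theorem; it is quoted from \cite{gross} and used as a black box in later computations. So there is no ``paper's own proof'' to compare against. Your outline follows the Dwork--Boyarsky approach via the splitting function $\theta(T)=\exp\bigl(\pi(T-T^{p})\bigr)$, which is one of the standard alternative routes to the Gross--Koblitz formula and is genuinely different from the cohomological argument in the original paper.

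That said, your final paragraph contains a real gap. You correctly express the Gauss sum as $(q-1)\sum_{n\equiv a\,(q-1)}C_{n}$ and propose to evaluate $C_{a}$ exactly while bounding the remaining terms by strictly larger $\pi$-adic valuation. But ``strictly larger valuation'' is not ``zero'': this reasoning only shows that the two sides of the identity agree modulo a higher power of $\pi$, which is Stickelberger's congruence, not the exact Gross--Koblitz equality. The tail $\sum_{n>a,\,n\equiv a}C_{n}$ is genuinely nonzero and must be accounted for, not discarded. (There is also a smaller loose end: the factor $q-1$ is congruent to $-1$ modulo $p$ but is not equal to $-1$, so ``reconciling'' it with the sign in the Morita recurrence only works at the level of leading terms, consistent with the same gap.)

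The usual way to close the argument in this framework is to use the differential equation $\theta'(T)=\pi(1-pT^{p-1})\theta(T)$ to obtain a recurrence on the coefficients, and hence a first-order recurrence in $a$ for the full sums $(q-1)\sum_{n\equiv a}C_{n}$. One then checks that the normalized quantity $g(\overline{\omega}^{a})/\pi^{s(a)}$ and the product $-\prod_{i=0}^{r-1}\Gamma_{p}\bigl(\langle ap^{i}/(q-1)\rangle\bigr)$ satisfy the \emph{same} recurrence in $a$, the latter via the Morita relation $\Gamma_{p}(x+1)/\Gamma_{p}(x)=-x$ for $p\nmid x$; since both sides are $p$-adic units and agree at $a=0$, they coincide for all $a$. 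Your sketch assembles the right ingredients but stops at the valuation estimate, one step short of the recurrence comparison that actually forces exact equality.
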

The following lemma relates products of values of $p$-adic gamma function.
\begin{lemma}\emph{(\cite[Lemma 3.1]{BS1}).}\label{lemma3_1}
Let $p$ be a prime and $q=p^r$. For $0\leq a\leq q-2$ and $t\geq 1$ with $p\nmid t$, we have
\begin{align}
\omega(t^{-ta})\prod\limits_{i=0}^{r-1}\Gamma_p\left(\langle\frac{-tp^ia}{q-1}\rangle\right)
\prod\limits_{h=1}^{t-1}\Gamma_p\left(\langle \frac{hp^i}{t}\rangle\right)
=\prod\limits_{i=0}^{r-1}\prod\limits_{h=0}^{t-1}\Gamma_p\left(\langle\frac{p^i(1+h)}{t}-\frac{p^ia}{q-1}\rangle \right).\notag
\end{align}
\end{lemma}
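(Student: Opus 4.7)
The plan is to derive this lemma as the Gross-Koblitz translation of the Davenport-Hasse relation. I will first assume $q\equiv 1\pmod{t}$, so that the $t$ characters $\chi$ of order dividing $t$ in $\widehat{\mathbb{F}_q^\times}$ are precisely $\omega^{j(q-1)/t}$ for $j=0,1,\ldots,t-1$. Applying Theorem~\ref{thm2_2} with $m=t$ and $\psi=\omega^a$ gives
\begin{align*}
\prod_{j=0}^{t-1} g(\omega^{j(q-1)/t+a}) = -\,g(\omega^{ta})\,\omega(t^{-ta})\,\prod_{j=0}^{t-1} g(\omega^{j(q-1)/t}),
\end{align*}
after noting $\omega^a(t^{-t})=\omega(t^{-ta})$, which is exactly the Teichm\"{u}ller factor appearing on the left-hand side of the lemma.

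Next I would apply the Gross-Koblitz formula (Theorem~\ref{thm2_3}) to each nontrivial Gauss sum, using $g(\omega^b)=-\pi^{(p-1)\sum_i\langle -bp^i/(q-1)\rangle}\prod_{i=0}^{r-1}\Gamma_p(\langle -bp^i/(q-1)\rangle)$, and use $g(\varepsilon)=-1$ for the $j=0$ factor on the right. Two matchings are then required. First, for the $\Gamma_p$-products: for each fixed $i$, since $\gcd(p,t)=1$ the map $j\mapsto(-jp^i)\bmod t$ permutes $\{0,\ldots,t-1\}$, which identifies $\prod_{j=1}^{t-1}\Gamma_p(\langle -jp^i/t\rangle)$ with $\prod_{h=1}^{t-1}\Gamma_p(\langle hp^i/t\rangle)$, and by the same token identifies $\prod_{j=0}^{t-1}\Gamma_p(\langle -jp^i/t - ap^i/(q-1)\rangle)$ with $\prod_{h=0}^{t-1}\Gamma_p(\langle (h+1)p^i/t - ap^i/(q-1)\rangle)$, since both products run over the same multiset of residues modulo $t$. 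Second, for the $\pi$-exponents: for each fixed $i$, the classical Hermite identity $\sum_{k=0}^{t-1}\langle x+k/t\rangle = \langle tx\rangle + (t-1)/2$ applied with $x=-ap^i/(q-1)$, together with $\sum_{j=1}^{t-1}\langle -jp^i/t\rangle=(t-1)/2$, shows that the $\pi$-exponents on the two sides of the Davenport-Hasse identity (post Gross-Koblitz) coincide. The $(-1)$-powers also match, so collecting everything yields exactly the claimed identity.

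The main obstacle is the careful bookkeeping of $\pi$-exponents and signs, in particular the edge cases where some $\omega^b$ is trivial (so $g(\omega^b)=-1$ replaces the Gross-Koblitz expression), which I would handle by tracking those factors separately and noting that the corresponding $\Gamma_p$-values reduce to $1$ under the conventions. For the residual case $q\not\equiv 1\pmod{t}$: since $p\nmid t$ there exists $s\geq 1$ with $q^s\equiv 1\pmod{t}$, and I would apply the identity already established in $\mathbb{F}_{q^s}$ and descend, exploiting that each $\Gamma_p$ factor in the statement depends only on the residues of $p^i$ modulo $t$ and modulo $q-1$ and so transforms in a controlled way under field extension.
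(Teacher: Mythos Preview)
The paper does not supply its own proof of this lemma; it is quoted verbatim from \cite[Lemma~3.1]{BS1}. Your plan via Davenport--Hasse plus Gross--Koblitz is exactly the expected route and, for the case $q\equiv 1\pmod t$, your sketch is correct: the permutation argument on residues modulo $t$ matches the $\Gamma_p$-products, the Hermite identity $\sum_{k=0}^{t-1}\langle x+k/t\rangle=\langle tx\rangle+(t-1)/2$ balances the $\pi$-exponents, and the signs line up because Gross--Koblitz already returns $g(\varepsilon)=-1$ when the exponent is zero, so the ``edge cases'' you flag are in fact absorbed automatically.

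The genuine soft spot is your descent to the case $q\not\equiv 1\pmod t$. Choosing $s$ with $q^s\equiv 1\pmod t$ and lifting $a$ to $a'=a\,(q^s-1)/(q-1)$ does make all the fractional parts match, but then \emph{every} ingredient on both sides---the $\Gamma_p$-products and the Teichm\"uller factor $\omega(t^{-ta})$---gets raised to the $s$-th power (since $p^i\bmod(q-1)$ has period $r$, and the inner products over $h$ are independent of $i$ by your own permutation observation). So the identity over $\mathbb{F}_{q^s}$ yields only $(\text{LHS})^s=(\text{RHS})^s$ in $\mathbb{Z}_p^\times$, which determines the desired equality merely up to an $s$-th root of unity. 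This matters here because the present paper applies the lemma with $t=4$ for \emph{all} odd $q$, including $q\equiv 3\pmod 4$. One clean fix is to bypass Davenport--Hasse entirely and deduce the identity from the $p$-adic multiplication (distribution) formula for $\Gamma_p$, which holds for any $t$ with $p\nmid t$ and requires no divisibility of $q-1$ by $t$; alternatively, one can pin down the root of unity by specialising $a=0$, where both sides are visibly equal.
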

We now prove the following lemma which will be used to prove our results.
\begin{lemma}\label{lemma3.2}
Let $p$ be an odd prime and $q=p^r$. Then for $0\leq a\leq q-2$ and $0\leq i\leq r-1$ we have
\begin{align}\label{eq-3.10}
-\left\lfloor\frac{-4ap^i}{q-1}\right\rfloor +\left\lfloor\frac{-2ap^i}{q-1}\right\rfloor
=-\left\lfloor\langle\frac{p^i}{4}\rangle-\frac{ap^i}{q-1}\right\rfloor-\left\lfloor\langle\frac{3p^i}{4}\rangle-\frac{ap^i}{q-1}\right\rfloor.
\end{align}
\end{lemma}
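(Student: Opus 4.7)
The plan is to rewrite the left-hand side using Hermite's identity for the floor function, after which the right-hand side emerges essentially immediately once one inspects the residue of $p^i$ modulo $4$.

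First, I would set $x := -\dfrac{ap^i}{q-1}$ so the left-hand side becomes $-\lfloor 4x\rfloor + \lfloor 2x\rfloor$. Hermite's identity $\lfloor mx\rfloor = \sum_{k=0}^{m-1}\lfloor x + k/m\rfloor$ gives
\begin{align*}
\lfloor 4x\rfloor &= \lfloor x\rfloor + \lfloor x + \tfrac{1}{4}\rfloor + \lfloor x + \tfrac{1}{2}\rfloor + \lfloor x + \tfrac{3}{4}\rfloor,\\
\lfloor 2x\rfloor &= \lfloor x\rfloor + \lfloor x + \tfrac{1}{2}\rfloor,
\end{align*}
so after subtracting and substituting back, the left-hand side of \eqref{eq-3.10} equals
$$-\left\lfloor \tfrac{1}{4} - \tfrac{ap^i}{q-1}\right\rfloor -\left\lfloor \tfrac{3}{4} - \tfrac{ap^i}{q-1}\right\rfloor.$$

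Next, since $p$ is odd, $p^i$ is odd, so $p^i \equiv 1$ or $3 \pmod{4}$. In the first case $\langle p^i/4\rangle = 1/4$ and $\langle 3p^i/4\rangle = 3/4$, so the expression above is literally the right-hand side. In the second case $\langle p^i/4\rangle = 3/4$ and $\langle 3p^i/4\rangle = 1/4$; the two summands are simply interchanged and equality still holds by commutativity of addition. This handles all the cases and completes the proof.

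There is no real obstacle here; the only thing to be careful about is the sign bookkeeping when pulling the minus signs inside the floors and the two-case dichotomy on $p^i \bmod 4$. Note that $p \nmid 4$, so $\langle p^i/4\rangle$ and $\langle 3p^i/4\rangle$ are both nonzero fractions in $\{1/4, 3/4\}$, and no degenerate boundary cases arise.
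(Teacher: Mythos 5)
Your proof is correct, but it takes a genuinely different route from the paper's. The paper argues by brute force: it writes $\left\lfloor\frac{-4ap^i}{q-1}\right\rfloor=4k+s$ with $0\leq s\leq 3$, and from the resulting inequality $4k+s\leq\frac{-4ap^i}{q-1}<4k+s+1$ reads off the values of $\left\lfloor\frac{-2ap^i}{q-1}\right\rfloor$, $\left\lfloor\langle\frac{p^i}{4}\rangle-\frac{ap^i}{q-1}\right\rfloor$ and $\left\lfloor\langle\frac{3p^i}{4}\rangle-\frac{ap^i}{q-1}\right\rfloor$ separately for each of $s=0,1,2,3$ (and for each residue of $p^i$ modulo $4$), then checks the identity case by case. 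You instead apply Hermite's identity $\lfloor mx\rfloor=\sum_{k=0}^{m-1}\lfloor x+k/m\rfloor$ with $m=4$ and $m=2$ to the single variable $x=-\frac{ap^i}{q-1}$, which makes the terms $\lfloor x\rfloor$ and $\lfloor x+\tfrac12\rfloor$ cancel and leaves exactly $-\lfloor x+\tfrac14\rfloor-\lfloor x+\tfrac34\rfloor$; the only remaining observation, common to both proofs, is that $\{\langle p^i/4\rangle,\langle 3p^i/4\rangle\}=\{\tfrac14,\tfrac34\}$ since $p^i$ is odd, so the two summands on the right of \eqref{eq-3.10} are the same two terms up to order. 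Your argument is shorter, avoids the fourfold case split entirely, and makes transparent why the identity holds (it is just the $m=4$ versus $m=2$ instance of a standard floor decomposition); the paper's argument is more self-contained in that it invokes nothing beyond the definition of the floor function. Both are complete and correct.
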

\begin{proof}
Let $\left\lfloor\frac{-4ap^i}{q-1}\right\rfloor=4k+s$, where $k, s \in \mathbb{Z}$ satisfying $0\leq s\leq 3$.
Then we have
\begin{align}\label{eq-10.1}
4k+s\leq\frac{-4ap^i}{q-1}< 4k+s+1.
\end{align}
If $p^i\equiv 1\pmod{4}$, then \eqref{eq-10.1} yields
\begin{align}
\left\lfloor\frac{-2ap^i}{q-1}\right\rfloor=\left\{
                                              \begin{array}{ll}
                                                2k, & \hbox{if $s=0,1$;} \\
                                                2k+1, & \hbox{if $s=2,3$,}
                                              \end{array}
                                            \right.\notag
\end{align}
\begin{align}
\left\lfloor\langle\frac{p^i}{4}\rangle-\frac{ap^i}{q-1}\right\rfloor=\left\{
                                                                         \begin{array}{ll}
                                                                           k, & \hbox{if $s=0,1,2$;} \\
                                                                           k+1, & \hbox{if $s=3$,}
                                                                         \end{array}
                                                                       \right.\notag
\end{align}
and
\begin{align}
\left\lfloor\langle\frac{3p^i}{4}\rangle-\frac{ap^i}{q-1}\right\rfloor=\left\{
                                                                         \begin{array}{ll}
                                                                           k, & \hbox{if $s=0$;} \\
                                                                           k+1, & \hbox{if $s=1,2,3$.}
                                                                         \end{array}
                                                                       \right.\notag
\end{align}
Putting the above values for different values of $s$ we readily obtain \eqref{eq-3.10}. The proof of \eqref{eq-3.10} is similar when $p^i\equiv 3\pmod{4}$.
\end{proof}
\section{Proofs of the main results}
We first prove two propositions which enable us to express certain character sums in terms of the $p$-adic hypergeometric series.
\begin{proposition}\label{prop-1}
Let $p$ be an odd prime and $x\in\mathbb{F}_q^{\times}$. Then we have
\begin{align}
\sum_{y\in\mathbb{F}_q}\varphi(y)\varphi(1-2y+xy^2)& = \varphi(2x)+\frac{q^2\varphi(-2)}{q-1}\sum_{\chi\in\widehat{\mathbb{F}_q^{\times}}}
{\varphi\chi^2\choose \chi}{\varphi\chi\choose \chi}\chi\left(\frac{x}{4}\right)\notag\\
& = -\varphi(-2){_2G}_{2}\left[\begin{array}{cc}
           \frac{1}{4}, & \frac{3}{4} \\
           0, & 0
         \end{array}|\frac{1}{x}\right]_q.\notag
\end{align}
\end{proposition}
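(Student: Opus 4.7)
The plan is to prove the two equalities in sequence. I first derive the character-sum identity by a single application of Greene's binomial expansion, then convert the result to the stated form using Davenport--Hasse and a reindexing of the character sum, and finally translate the binomial expression into $_{2}G_{2}$ via Gross--Koblitz.

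For the first equality, I apply \eqref{lemma2_3} to $\varphi(1-2y+xy^2)=\overline\varphi(1-(2y-xy^2))$. Multiplying by $\varphi(y)$ and summing over $y$, the $\delta$-contribution collapses to $\varphi(2/x)=\varphi(2x)$ (picked up at $y=2/x$), while the inner $y$-sum in the main term, after the substitution $y=2u/x$, equals $\varphi(2x)\chi(4/x)\,J(\varphi\chi,\chi)$. Converting the Jacobi sum through $J(A,C)=qC(-1){A\choose \overline{C}}$, an immediate consequence of \eqref{eq-0}, yields the preliminary identity
\[
S=\varphi(2x)+\frac{q^{2}\varphi(2x)}{q-1}\sum_{\chi}{\varphi\chi\choose \chi}{\varphi\chi\choose \overline\chi}\chi\!\left(\frac{-4}{x}\right).
\]
To match the stated form, I would replace both ${\varphi\chi\choose \chi}{\varphi\chi\choose \overline\chi}$ and ${\varphi\chi^{2}\choose \chi}{\varphi\chi\choose \chi}$ by Gauss-sum quotients via Lemmas~\ref{lemma2_1} and~\ref{lemma2_2}, invoke the $m=2$ Davenport--Hasse formula $g(\chi)g(\varphi\chi)=g(\chi^{2})g(\varphi)\chi(1/4)$ from Theorem~\ref{thm2_2} to rewrite $g(\varphi\chi)^{2}$, and perform the substitution $\chi\mapsto\varphi\chi$ in the summation index. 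This substitution fixes $\varphi\chi^{2}$, replaces $g(\varphi\chi)^{2}$ by $g(\chi)^{2}$, and multiplies $\chi(x/4)$ by $\varphi(x)$, so the natural form transforms exactly into $\sum_{\chi}{\varphi\chi^{2}\choose \chi}{\varphi\chi\choose \chi}\chi(x/4)$, with the prefactor reducing via $\varphi(2x)\varphi(-x)=\varphi(-2)$ to the stated value.

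For the second equality, I convert the resulting binomial sum into $_{2}G_{2}$. The combined simplification ${\varphi\chi^{2}\choose \chi}{\varphi\chi\choose \chi}=g(\varphi\chi^{2})/(g(\varphi)g(\chi)^{2})$ (modulo $\delta$-corrections from Lemma~\ref{lemma2_2}) is substituted; writing $\chi=\overline\omega^{a}$ and invoking Gross--Koblitz (Theorem~\ref{thm2_3}) converts each Gauss sum to a product of $\Gamma_{p}$-values times a power of $\pi$. Comparison with Definition~\ref{defin1} uses Lemma~\ref{lemma3_1} with $t=4$ to identify $\prod_{i}\Gamma_{p}(\langle p^{i}/4\rangle)\Gamma_{p}(\langle 3p^{i}/4\rangle)$ as the denominator matching the upper parameters $1/4,\,3/4$, and Lemma~\ref{lemma3.2} to rewrite the exponent of $-p$ coming from $\pi^{p-1}=-p$ as $-\lfloor\langle p^{i}/4\rangle-ap^{i}/(q-1)\rfloor-\lfloor\langle 3p^{i}/4\rangle-ap^{i}/(q-1)\rfloor$, exactly as in Definition~\ref{defin1}. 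The term $\varphi(2x)$ and the overall sign $-\varphi(-2)$ emerge from the $a=0$ contribution together with the minus signs from Gross--Koblitz.

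The main obstacle is the careful handling of the $\delta$-function corrections from Lemma~\ref{lemma2_2} that arise whenever $\varphi\chi^{2}$ or $\varphi\chi$ becomes trivial: they must be tracked through the Davenport--Hasse simplification, the reindexing $\chi\mapsto\varphi\chi$, and the Gross--Koblitz conversion, and must balance exactly against the constant term $\varphi(2x)$ and the $a=0$ term of the $_{2}G_{2}$ in order for the identity to close without residual contributions.
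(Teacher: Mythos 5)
Your second equality follows essentially the paper's own route (Jacobi sums to Gauss sums, Davenport--Hasse with $m=2$, Gross--Koblitz, then Lemma \ref{lemma3_1} and Lemma \ref{lemma3.2}; note only that Lemma \ref{lemma3_1} is needed with both $t=4$ and $t=2$, since a $\Gamma_p(\langle -2ap^i/(q-1)\rangle)$ sits in the denominator). The gap is in the first equality. Your forward expansion of $\varphi(1-2y+xy^2)$ via \eqref{lemma2_3} is correct and does yield
\begin{align}
S=\varphi(2x)+\frac{q^{2}\varphi(2x)}{q-1}\sum_{\chi}{\varphi\chi\choose \chi}{\varphi\chi\choose \overline{\chi}}\chi\left(\frac{-4}{x}\right),\notag
\end{align}
but this is not the stated form, and the conversion of ${\varphi\chi\choose \chi}{\varphi\chi\choose \overline{\chi}}\chi(-4/x)$ into ${\varphi\chi^{2}\choose \chi}{\varphi\chi\choose \chi}\chi(x/4)$ is exactly the step you leave unexecuted. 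Expanding $J(\varphi\chi,\chi)$ by Lemma \ref{lemma2_2} produces a term $(q-1)\chi(-1)\delta(\varphi\chi^{2})$, which is nonzero at $\chi=\chi_4,\chi_4^{3}$ whenever $q\equiv 1\pmod 4$ and contributes quantities involving $g(\chi_4^{\pm 1})$; further boundary terms arise at $\chi=\varepsilon$ when you invoke $g(\chi)g(\overline{\chi})=q\chi(-1)$ (Lemma \ref{lemma2_1} excludes $\varepsilon$), and at $\varphi\chi^{2}=\varepsilon$ once more when you invert $g(\varphi\chi^{2})$ after the reindexing $\chi\mapsto\varphi\chi$. You assert these ``must balance exactly'' against $\varphi(2x)$ and the $a=0$ term, but you never show that they do, and this is precisely where such Gauss-sum manipulations go wrong. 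As written, the argument does not close.

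It is worth seeing how the paper sidesteps this entirely: it runs the computation in the opposite direction, starting from $\sum_\chi{\varphi\chi^{2}\choose \chi}{\varphi\chi\choose \chi}\chi(x/4)$, using \eqref{eq-1} to rewrite the second factor as ${\varphi\chi^{2}\choose \varphi\chi}$, expanding that single Jacobi sum over $y$, collapsing the $\chi$-sum with \eqref{lemma2_3}, and landing on $\sum_y\varphi(y)\varphi(1-2y+xy^2)$ after the substitution $y\mapsto 2y$ --- with only one harmless $\delta$-term along the way and no Davenport--Hasse step needed for the first equality. To salvage your direction you must either carry out the full bookkeeping for all three families of exceptional characters, or reverse the computation as the paper does.
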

\begin{proof}
Applying \eqref{eq-1} and then \eqref{eq-0} we have
\begin{align}
\sum_{\chi\in\widehat{\mathbb{F}_q^{\times}}}
{\varphi\chi^2\choose \chi}{\varphi\chi\choose \chi}\chi\left(\frac{x}{4}\right)&=\sum_{\chi\in\widehat{\mathbb{F}_q^{\times}}}{\varphi\chi\choose \chi}\chi\left(\frac{x}{4}\right){\varphi\chi^2\choose \varphi\chi}\notag\\
&=\frac{\varphi(-1)}{q}\sum_{\chi\in\widehat{\mathbb{F}_q^{\times}}}{\varphi\chi\choose \chi}\chi\left(\frac{-x}{4}\right)J(\varphi\chi^2,\varphi\overline{\chi})\notag\\
&=\frac{\varphi(-1)}{q}\sum_{\substack{\chi\in\widehat{\mathbb{F}_q^{\times}}\\ y\in\mathbb{F}_q}}{\varphi\chi\choose \chi}\chi\left(\frac{-x}{4}\right)\varphi\chi^2(y)\varphi\overline{\chi}(1-y)\notag\\
&=\frac{\varphi(-1)}{q}\sum_{\substack{\chi\in\widehat{\mathbb{F}_q^{\times}}\\ y\in\mathbb{F}_q, y\neq 1}}
\varphi(y)\varphi(1-y){\varphi\chi\choose \chi}\chi\left(-\frac{xy^2}{4(1-y)}\right).\notag
\end{align}
Now, \eqref{lemma2_3} yields
\begin{align}
&\sum_{\chi\in\widehat{\mathbb{F}_q^{\times}}}
{\varphi\chi^2\choose \chi}{\varphi\chi\choose \chi}\chi\left(\frac{x}{4}\right)\notag\\
&=\frac{\varphi(-1)(q-1)}{q^2}\sum_{y\in\mathbb{F}_q, y\neq 1}
\varphi(y)\varphi(1-y)\left(\varphi\left(1+\frac{xy^2}{4(1-y)}\right)-\delta\left(\frac{xy^2}{4(1-y)}\right)\right)
\notag\\
&=\frac{(q-1)\varphi(-1)}{q^2}\sum_{y\in\mathbb{F}_q, y\neq 1}\varphi(y)\varphi(1-y)\varphi\left(1+\frac{xy^2}{4(1-y)}\right).\notag
\end{align}
Since $p$ is an odd prime, taking the transformation $y\mapsto 2y$ we obtain
\begin{align}
&\sum_{\chi\in\widehat{\mathbb{F}_q^{\times}}}
{\varphi\chi^2\choose \chi}{\varphi\chi\choose \chi}\chi\left(\frac{x}{4}\right)\notag\\
&=\frac{(q-1)\varphi(-2)}{q^2}\sum_{\substack{y\in\mathbb{F}_q\\ y\neq \frac{1}{2}}}\varphi(y)\varphi(1-2y)\varphi\left(1+\frac{xy^2}{1-2y}\right)\notag\\
&=\frac{(q-1)\varphi(-2)}{q^2}\sum_{\substack{y\in\mathbb{F}_q\\ y\neq \frac{1}{2}}}\varphi(y)\varphi(1-2y+xy^2)\notag\\
&=\frac{(q-1)\varphi(-2)}{q^2}\sum_{y\in\mathbb{F}_q}\varphi(y)\varphi(1-2y+xy^2)-\frac{\varphi(-x)(q-1)}{q^2},\notag
\end{align}
from which we readily obtain the first identity of the proposition.
\par To complete the proof of the proposition, we relate the above character sums to the $p$-adic hypergeometric series. From \eqref{eq-0}, Lemma \ref{lemma2_2},
and then using the facts that $\delta(\chi)=0$ for $\chi\neq \varepsilon, \delta(\varepsilon)=1$ and $g(\varepsilon)=-1$, we deduce that
\begin{align}
A & := \sum_{\chi\in\widehat{\mathbb{F}_q^{\times}}}
{\varphi\chi^2\choose\chi}{\varphi\chi\choose\chi}\chi\left(\frac{x}{4}\right)
=\frac{1}{q^2}\sum_{\chi\in\widehat{\mathbb{F}_q^{\times}}}J(\varphi\chi^2,\overline{\chi})J(\varphi\chi, \overline{\chi})\chi\left(\frac{x}{4}\right)\notag\\
&=\frac{1}{q^2}\sum_{\chi\in\widehat{\mathbb{F}_q^{\times}}}\frac{g(\varphi\chi^2)g^2(\overline{\chi})}{g(\varphi)}\chi\left(\frac{x}{4}\right)
+\frac{q-1}{q^2}\sum_{\chi\in\widehat{\mathbb{F}_q^{\times}}}\frac{g(\varphi\chi)g(\overline{\chi})}
{g(\varphi)}\chi\left(-\frac{x}{4}\right)\delta(\varphi\chi)\notag\\
&=\frac{1}{q^2}\sum_{\chi\in\widehat{\mathbb{F}_q^{\times}}}
\frac{g(\varphi\chi^2)g^2(\overline{\chi})}{g(\varphi)}\chi\left(\frac{x}{4}\right)-\frac{q-1}{q^2}\varphi(-x).\notag
\end{align}
Now, taking $\chi=\omega^a$ we have
\begin{align}
A&=\frac{1}{q^2}\sum_{a=0}^{q-2}\frac{g(\varphi\omega^{2a})g^2(\overline{\omega}^a)}{g(\varphi)}
\omega^a\left(\frac{x}{4}\right)-\frac{q-1}{q^2}\varphi(-x).\notag
\end{align}
Using Davenport-Hasse relation for $m=2$ and $\psi=\omega^{2a}$ we obtain
\begin{align}
g(\varphi\omega^{2a})=\frac{g(\omega^{4a})\overline{\omega}^{2a}(4)g(\varphi)}{g(\omega^{2a})}.\notag
\end{align}
Thus,
\begin{align}
A&=\frac{1}{q^2}\sum_{a=0}^{q-2}\omega^a(x)\overline{\omega}^{3a}(4)
\frac{g(\omega^{4a})g^2(\overline{\omega}^a)}{g(\omega^{2a})}-\frac{q-1}{q^2}\varphi(-x).\notag
\end{align}
Applying Gross-Koblitz formula we deduce that
\begin{align}
A&=\frac{1}{q^2}\sum_{a=0}^{q-2}\omega^a(x)\overline{\omega}^{3a}(4)\pi^{(p-1)\alpha}\prod_{i=0}^{r-1}
\frac{\Gamma_p(\langle\frac{-4ap^i}{q-1}\rangle)\Gamma_p^2(\langle\frac{ap^i}{q-1}\rangle)}{\Gamma_p(\langle\frac{-2ap^i}{q-1}\rangle)}
-\frac{q-1}{q^2}\varphi(-x),\notag
\end{align}
where $\alpha=\sum_{i=0}^{r-1}\{\langle\frac{-4ap^i}{q-1}\rangle+2\langle\frac{ap^i}{q-1}\rangle-\langle\frac{-2ap^i}{q-1}\rangle\}$.
Using Lemma \ref{lemma3_1} for $t=4$ and $t=2$, we deduce that
\begin{align}
A&=\frac{1}{q^2}\sum_{a=0}^{q-2}\omega^a(x)\pi^{(p-1)\alpha}\prod_{i=0}^{r-1}\frac{\Gamma_p(\langle(\frac{1}{4}-\frac{a}{q-1})p^i\rangle)
\Gamma_p(\langle(\frac{3}{4}-\frac{a}{q-1})p^i\rangle)\Gamma_p^2(\langle\frac{ap^i}{q-1}\rangle)}{\Gamma_p(\langle\frac{p^i}{4}\rangle)
\Gamma_p(\langle\frac{3p^i}{4}\rangle)}\notag\\
&\hspace{1cm}-\frac{q-1}{q^2}\varphi(-x).\notag
\end{align}
Finally, using Lemma \ref{lemma3.2} we have
\begin{align}
A=-\frac{q-1}{q^2}\cdot{_2G}_{2}\left[\begin{array}{cc}
           \frac{1}{4}, & \frac{3}{4} \\
           0, & 0
         \end{array}|\frac{1}{x}\right]_q-\frac{q-1}{q^2}\varphi(-x).\notag
\end{align}
This completes the proof of the proposition.
\end{proof}
\begin{proposition}\label{prop-2}
Let $p$ be an odd prime and $x\in \mathbb{F}_q$. Then, for $x\neq 1$, we have
\begin{align}
\sum_{y\in\mathbb{F}_q}\varphi(y)\varphi(1-2y+xy^2)&=2\varphi(x-1)+\frac{q^2}{q-1}\sum_{\chi\in\widehat{\mathbb{F}_q^{\times}}}
{\varphi\chi^2\choose \chi}{\varphi\chi\choose \chi^2}\chi(x-1)\notag\\
&=-{_2G}_{2}\left[\begin{array}{cc}
           \frac{1}{4}, & \frac{3}{4} \\
           0, & 0
         \end{array}|\frac{1}{1-x}\right]_q.\notag
\end{align}
\end{proposition}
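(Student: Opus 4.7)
The plan is to mirror the two-step strategy used in the proof of Proposition \ref{prop-1}: first exhibit the character sum $\sum_{y}\varphi(y)\varphi(1-2y+xy^2)$ as a combinatorial transform of $\sum_{\chi}\binom{\varphi\chi^2}{\chi}\binom{\varphi\chi}{\chi^2}\chi(x-1)$, and then, via Gauss-sum and $p$-adic gamma function machinery, identify the same Jacobi-sum expression with the $p$-adic hypergeometric series ${_2G}_2\left[\begin{smallmatrix}\tfrac14,&\tfrac34\\0,&0\end{smallmatrix}\mid\tfrac{1}{1-x}\right]_q$.

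For the first equality, I would begin with $\sum_{\chi}\binom{\varphi\chi^2}{\chi}\binom{\varphi\chi}{\chi^2}\chi(x-1)$, apply the symmetry \eqref{eq-1} to rewrite $\binom{\varphi\chi^2}{\chi} = \binom{\varphi\chi^2}{\varphi\chi}$, and then expand the resulting Jacobi sum $J(\varphi\chi^2,\varphi\overline{\chi})$ via \eqref{eq-0} as a sum over $y\in\mathbb{F}_q$. After interchanging orders of summation one is left with an inner sum of the form $\sum_{\chi}\binom{\varphi\chi}{\chi^2}\chi(\text{stuff})$, which I plan to evaluate by a second Jacobi sum expansion of $\binom{\varphi\chi}{\chi^2}$ followed by the orthogonality of characters; the surviving constraint picks out a pair $(z_1,z_2)$ with $z_1 z_2 = 1$ and $(1-z)^2 = z \cdot t$. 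A change of variable should then reshape the whole sum into $\sum_{y}\varphi(y)\varphi(1-2y+xy^2)$, with a boundary correction of $2\varphi(x-1)$ (the factor $2$ arising because the quadratic equation for $z$ has two solutions with equal value of $\varphi$).

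For the second equality, I would convert both binomial coefficients to Gauss sums via Lemma \ref{lemma2_2}, taking care to isolate the $\delta$-contributions at $\chi=\varphi$ (where $\varphi\chi$ or $\varphi\overline{\chi}$ degenerates). Setting $\chi=\omega^a$ and applying the Davenport-Hasse relation (Theorem \ref{thm2_2}) with $m=2$ to each pair $g(\varphi\omega^{2a})g(\omega^{2a})$ collapses everything into $g(\omega^{4a})$, $g(\overline{\omega}^a)$, and $g(\overline{\omega}^{2a})$, up to a $\overline{\omega}^{a}$-power of an explicit constant. Gross-Koblitz (Theorem \ref{thm2_3}) then rewrites these Gauss sums in terms of $\Gamma_p$-values; Lemma \ref{lemma3_1} with $t=2$ and $t=4$ merges the $\Gamma_p$-products into the shape required by Definition \ref{defin1}; and Lemma \ref{lemma3.2} takes care of the $\lfloor\cdot\rfloor$ exponents, giving exactly ${_2G}_2\left[\begin{smallmatrix}\tfrac14,&\tfrac34\\0,&0\end{smallmatrix}\mid\tfrac{1}{1-x}\right]_q$. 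The main obstacle I expect is the first step: unlike Proposition \ref{prop-1}, the binomial $\binom{\varphi\chi}{\chi^2}$ is not of the form required by \eqref{lemma2_3} or \eqref{lemma2_4}, so the character-theoretic reduction must proceed via a double Jacobi-sum expansion and an explicit analysis of the quadratic constraint $(1-z)^2=zt$, tracking the boundary terms carefully enough to produce the precise constant $2\varphi(x-1)$ and the correct argument $\chi(x-1)$.
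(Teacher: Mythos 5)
Your plan for the second equality (convert to Gauss sums, isolate the $\delta$-contributions, apply Davenport--Hasse with $m=2$ to both $g(\varphi\chi^2)$ and $g(\varphi\overline{\chi})$, then Gross--Koblitz, Lemma \ref{lemma3_1} with $t=2,4$, and Lemma \ref{lemma3.2}) is essentially what the paper does and is sound. The gap is in the first equality. If you expand $J(\varphi\chi^2,\varphi\overline{\chi})$ over $y$ and then treat the inner sum $\sum_{\chi}{\varphi\chi\choose \chi^2}\chi(t_y)$ with $t_y=\frac{(1-x)y^2}{1-y}$ by a second Jacobi expansion plus orthogonality, the constraint $(1-z)^2=zt$ becomes $z^2-(2+t)z+1=0$; each root satisfies $\varphi(z)=\varphi(t)$ and the number of roots is $1+\varphi(t(t+4))$, so the root contribution is $\varphi(t)+\varphi(t+4)$. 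Carrying this through, the $\varphi(t_y)$ piece collapses to a boundary term, while the $\varphi(t_y+4)$ piece is $\sum_y\varphi(y)\varphi\left((1-x)y^2-4y+4\right)$, which after $y\mapsto 2y$ equals $\varphi(2)\sum_y\varphi(y)\varphi\left(1-2y+(1-x)y^2\right)$. The net result of your route is
\[
2\varphi(x-1)+\frac{q^2}{q-1}\sum_{\chi\in\widehat{\mathbb{F}_q^{\times}}}{\varphi\chi^2\choose \chi}{\varphi\chi\choose \chi^2}\chi(x-1)
=\varphi(-2)\sum_{y\in\mathbb{F}_q}\varphi(y)\varphi\left(1-2y+(1-x)y^2\right),
\]
i.e.\ the mirror sum with $1-x$ in place of $x$, not $\sum_y\varphi(y)\varphi(1-2y+xy^2)$. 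These agree precisely when $\sum_y\varphi(y)\varphi(1-2y+xy^2)=\varphi(-2)\sum_y\varphi(y)\varphi(1-2y+(1-x)y^2)$, which \emph{is} Theorem \ref{MT-1} --- the very statement that Propositions \ref{prop-1} and \ref{prop-2} are designed to prove by evaluating the same character sum in two genuinely different ways. So your argument either begs the question or, when combined with Proposition \ref{prop-1} and your second step, merely re-derives Proposition \ref{prop-1} with $x$ replaced by $1-x$; it never establishes the asserted equality with $\sum_y\varphi(y)\varphi(1-2y+xy^2)$.

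The repair is a different regrouping, which is what the paper does. After writing $B=\frac{1}{q^2}\sum_\chi\frac{g(\varphi\chi^2)g(\overline{\chi})g(\overline{\chi}^2)}{g(\varphi\overline{\chi})}\chi(1-x)-\frac{2(q-1)}{q^2}\varphi(x-1)$, split the Gauss-sum quotient as $\frac{g(\varphi\chi^2)g(\overline{\chi}^2)}{g(\varphi)}\cdot\frac{g(\varphi)g(\overline{\chi})}{g(\varphi\overline{\chi})}=q{\varphi\chi^2\choose \chi^2}\cdot\left(q\chi(-1){\varphi\choose \chi}-(q-1)\chi(-1)\delta(\varphi\overline{\chi})\right)$. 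Expanding ${\varphi\chi^2\choose \chi^2}$ as a Jacobi sum puts the square $(1-y)^2$ in the denominator of the argument, and the remaining $\chi$-sum $\sum_\chi{\varphi\choose\chi}\chi\left(\frac{(x-1)y^2}{(1-y)^2}\right)$ is exactly of the binomial-theorem form \eqref{lemma2_4}, producing $\varphi\left(1+\frac{(x-1)y^2}{(1-y)^2}\right)=\varphi\left(\frac{1-2y+xy^2}{(1-y)^2}\right)$ --- the correct quadratic, with no quadratic constraint to analyze and no spurious $\varphi(-2)$.
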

\begin{proof} From \eqref{eq-0} and then using Lemma \ref{lemma2_2}, we have
\begin{align}\label{eq-5}
{\varphi\chi^2\choose \chi}{\varphi\chi\choose \chi^2}&=\frac{\chi(-1)}{q^2}J(\varphi\chi^2, \overline{\chi})J(\varphi\chi, \overline{\chi}^2)\notag\\
&=\frac{\chi(-1)}{q^2}\left[\frac{g(\varphi\chi^2)g(\overline{\chi})}{g(\varphi\chi)}+(q-1)\chi(-1)\delta(\varphi\chi)\right]\notag\\
&\hspace{1 cm}\times\left[\frac{g(\varphi\chi)g(\overline{\chi}^2)}{g(\varphi\overline{\chi})}+(q-1)\delta(\varphi\overline{\chi})\right].
\end{align}
From Lemma \ref{lemma2_1}, we have $g(\varphi)^2=q\varphi(-1)$. Since $\delta(\chi)=0$ for $\chi\neq \varepsilon, \delta(\varepsilon)=1$ and $g(\varepsilon)=-1$, \eqref{eq-5} yields
\begin{align}\label{eq-3.9}
B:=\sum_{\chi\in\widehat{\mathbb{F}_q^{\times}}}&
{\varphi\chi^2\choose \chi}{\varphi\chi\choose \chi^2}\chi(x-1)\notag\\&=\frac{1}{q^2}\sum_{\chi\in\widehat{\mathbb{F}_q^{\times}}}\frac{g(\varphi\chi^2)g(\overline{\chi})g(\overline{\chi}^2)}
{g(\varphi\overline{\chi})}\chi(1-x)-2\frac{q-1}{q^2}\varphi(x-1).
\end{align}
Using Lemma \ref{lemma2_2} and then \eqref{eq-0} we obtain
\begin{align}\label{eq-3.3}
\frac{g(\varphi\chi^2)g(\overline{\chi}^2)}{g(\varphi)}=q{\varphi\chi^2\choose \chi^2},
\end{align}
and
\begin{align}\label{eq-3.4}
\frac{g(\varphi)g(\overline{\chi})}
{g(\varphi\overline{\chi})}=q\chi(-1){\varphi\choose \chi}-(q-1)\chi(-1)\delta(\varphi\overline{\chi}).
\end{align}
From \eqref{eq-2}, we have ${\varphi\choose \varepsilon}=-\frac{1}{q}$. Hence, \eqref{eq-3.3} and \eqref{eq-3.4} yield
\begin{align}\label{eq-3.5}
&\frac{1}{q^2}\sum_{\chi\in\widehat{\mathbb{F}_q^{\times}}}\frac{g(\varphi\chi^2)g(\overline{\chi})g(\overline{\chi}^2)}
{g(\varphi\overline{\chi})}\chi(1-x)\notag\\
&=\sum_{\chi\in\widehat{\mathbb{F}_q^{\times}}}
{\varphi\chi^2\choose \chi^2}{\varphi\choose \chi}\chi(x-1)-\frac{q-1}{q}\sum_{\chi\in\widehat{\mathbb{F}_q^{\times}}}\chi(x-1){\varphi\chi^2\choose \chi^2}\delta(\varphi\overline{\chi})\notag\\
&=\sum_{\chi\in\widehat{\mathbb{F}_q^{\times}}}{\varphi\chi^2\choose \chi^2}{\varphi\choose \chi}\chi(x-1)-\frac{q-1}{q}{\varphi\choose \varepsilon}\varphi(x-1)\notag\\
&=\sum_{\chi\in\widehat{\mathbb{F}_q^{\times}}}{\varphi\chi^2\choose \chi^2}{\varphi\choose \chi}\chi(x-1)+\frac{q-1}{q^2}\varphi(x-1).
\end{align}
Applying \eqref{eq-0} on the right hand side of \eqref{eq-3.5}, and then \eqref{lemma2_4} we have
\begin{align}
&\frac{1}{q^2}\sum_{\chi\in\widehat{\mathbb{F}_q^{\times}}}\frac{g(\varphi\chi^2)g(\overline{\chi})g(\overline{\chi}^2)}
{g(\varphi\overline{\chi})}\chi(1-x)\notag\\
&= \frac{1}{q}\sum_{\substack{\chi\in\widehat{\mathbb{F}_q^{\times}}\\ y\in\mathbb{F}_q}}{\varphi\choose \chi}\chi(x-1)\varphi\chi^2(y)\overline{\chi}^2(1-y)+\frac{q-1}{q^2}\varphi(x-1)\notag\\
&=\frac{1}{q}\sum_{\substack{\chi\in\widehat{\mathbb{F}_q^{\times}}\\ y\in\mathbb{F}_q, y\neq 1}}\varphi(y){\varphi\choose \chi}\chi\left(\frac{(x-1)y^2}{(1-y)^2}\right)+\frac{q-1}{q^2}\varphi(x-1)\notag\\
&=\frac{q-1}{q^2}\sum_{y\in\mathbb{F}_q, y\neq 1}\varphi(y)\left[\varphi\left(1+\frac{(x-1)y^2}{(1-y)^2}\right)-\delta\left(\frac{(x-1)y^2}{(1-y)^2}\right)\right]
+\frac{q-1}{q^2}\varphi(x-1)\notag\\
&=\frac{q-1}{q^2}\sum_{\substack{y\in\mathbb{F}_q\\y\neq1}}\varphi(y)\varphi(1-2y+xy^2)+\frac{q-1}{q^2}\varphi(x-1).\notag
\end{align}
Adding and subtracting the term under summation for $y=1$, we have
\begin{align}\label{eq-3.7}
&\frac{1}{q^2}\sum_{\chi\in\widehat{\mathbb{F}_q^{\times}}}\frac{g(\varphi\chi^2)g(\overline{\chi})g(\overline{\chi}^2)}
{g(\varphi\overline{\chi})}\chi(1-x)\notag\\
&=\frac{q-1}{q^2}\sum_{y\in\mathbb{F}_q}\varphi(y)\varphi(1-2y+xy^2).
\end{align}
Combining \eqref{eq-3.9} and \eqref{eq-3.7} we readily obtain the first equality of the proposition.
\par To complete the proof of the proposition, we relate the character sums given in \eqref{eq-3.9} to the $p$-adic hypergeometric series. 
Using Davenport-Hasse relation for $m=2, \psi=\chi^2$ and $m=2, \psi=\overline{\chi}$, we have
\begin{align}
g(\varphi\chi^2)=\frac{g(\chi^4)g(\varphi)\overline{\chi}^2(4)}{g(\chi^2)}\notag
\end{align}
and
\begin{align}
g(\varphi\overline{\chi})=\frac{g(\overline{\chi}^2)g(\varphi)\chi(4)}{g(\overline{\chi})},\notag
\end{align}
respectively. Plugging these two expressions in \eqref{eq-3.9} we obtain
\begin{align}
B&=\frac{1}{q^2}\sum_{\chi\in\widehat{\mathbb{F}_q^{\times}}}\frac{g(\chi^4)g^2(\overline{\chi})}{g(\chi^2)}\overline{\chi}^3(4)\chi(1-x)
-2\frac{(q-1)}{q^2}\varphi(x-1).\notag
\end{align}
Now, considering $\chi=\omega^a$ and then applying Gross-Koblitz formula we obtain
\begin{align}
B&=\frac{1}{q^2}\sum_{a=0}^{q-2}\omega^{a}(1-x)~\overline{\omega}^{3a}(4)\pi^{(p-1)\alpha}\prod_{i=0}^{r-1}
\frac{\Gamma_p(\langle\frac{-4ap^i}{q-1}\rangle)\Gamma_p^2(\langle\frac{ap^i}{q-1}\rangle)}{\Gamma_p(\langle\frac{-2ap^i}{q-1}\rangle)}
-2\frac{(q-1)}{q^2}\varphi(x-1).\notag
\end{align}
where $\alpha=\sum_{i=0}^{r-1}\{\langle\frac{-4ap^i}{q-1}\rangle+2\langle\frac{ap^i}{q-1}\rangle-\langle\frac{-2ap^i}{q-1}\rangle\}$.
Proceeding similarly as shown in the proof of Proposition \ref{prop-1}, we deduce that
\begin{align}
B=-\frac{q-1}{q^2}\cdot{_2G}_{2}\left[\begin{array}{cc}
           \frac{1}{4}, & \frac{3}{4} \\
           0, & 0
         \end{array}|\frac{1}{1-x}\right]_q-2\frac{q-1}{q^2}\varphi(x-1).\notag
\end{align}
This completes the proof of the proposition.
\end{proof}

\par
Before we prove our main results, we now recall the following definition of a finite field hypergeometric function introduced by McCarthy in \cite{mccarthy3}.
\begin{definition}\cite[Definition 1.4]{mccarthy3}
Let $A_0, A_1, \ldots A_n, B_1, B_2, \ldots, B_n\in\widehat{\mathbb{F}_q^{\times}}$. Then the ${_{n+1}F}_n(\cdots)^{\ast}$ 
finite field hypergeometric function over $\mathbb{F}_q$ is defined by
\begin{align}
&{_{n+1}F}_n\left(\begin{array}{cccc}
                A_0, & A_1, & \ldots, & A_n\\
                 & B_1, & \ldots, & B_n
              \end{array}\mid x \right)^{\ast}_q\notag\\
              &=\frac{1}{q-1}\sum_{\chi\in \widehat{\mathbb{F}_q^\times}}\prod_{i=0}^{n}\frac{g(A_i\chi)}{g(A_i)}\prod_{j=1}^n\frac{g(\overline{B_j\chi})}{g(\overline{B_j})}g(\overline{\chi})
\chi(-1)^{n+1}\chi(x).\notag
\end{align}
\end{definition}
The following proposition gives a relation between McCarthy's and Greene's finite field hypergeometric functions when certain conditions on the parameters are satisfied.
\begin{proposition}\cite[Proposition 2.5]{mccarthy3}\label{prop-300}
If $A_0\neq\varepsilon$ and $A_i\neq B_i$ for $1\leq i\leq n$, then
\begin{align}
&{_{n+1}F}_n\left(\begin{array}{cccc}
                A_0, & A_1, & \ldots, & A_n\\
                 & B_1, & \ldots, & B_n
              \end{array}\mid x \right)_q^{\ast}\notag\\
              &=\left[\prod_{i=1}^n{A_i \choose B_i}^{-1}\right]{_{n+1}F}_n\left(\begin{array}{cccc}
                A_0, & A_1, & \ldots, & A_n\\
                 & B_1, & \ldots, & B_n
              \end{array}\mid x \right)_q.\notag
\end{align}              
\end{proposition}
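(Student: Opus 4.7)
The strategy is to convert both of Greene's and McCarthy's hypergeometric functions into character sums involving only Gauss sums, and then recognize one as a scalar multiple of the other. The key point is that the hypotheses $A_0 \neq \varepsilon$ and $A_i \neq B_i$ are precisely what kill the $\delta$-corrections appearing in Lemma \ref{lemma2_2}.

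First I would rewrite Greene's binomial coefficients. Applying \eqref{eq-0} and Lemma \ref{lemma2_2}, together with the facts that $A_0 \neq \varepsilon$ (so $\delta(A_0) = 0$) and $A_i\overline{B_i} \neq \varepsilon$ (so $\delta(A_i\overline{B_i}) = 0$), one obtains the clean formulas
$${A_0\chi \choose \chi} = \frac{\chi(-1)\,g(A_0\chi)g(\overline{\chi})}{q\,g(A_0)}, \qquad {A_i\chi \choose B_i\chi} = \frac{B_i\chi(-1)\,g(A_i\chi)g(\overline{B_i\chi})}{q\,g(A_i\overline{B_i})},$$
valid for every $\chi \in \widehat{\mathbb{F}_q^{\times}}$. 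Substituting these into the defining sum for ${_{n+1}F}_n(\cdots)_q$, the prefactor becomes $1/(q^n(q-1))$, and the sign $\chi(-1)^{n+1}\prod_{i=1}^n B_i(-1)$, which is already the sign pattern appearing in McCarthy's definition, factors neatly out of each summand.

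The second step is to replace each denominator $g(A_i\overline{B_i})$. Applying Lemma \ref{lemma2_2} once more to $J(A_i, \overline{B_i})$ and invoking \eqref{eq-0}, the condition $A_i \neq B_i$ again kills the $\delta$-correction and yields
$$g(A_i\overline{B_i}) = \frac{g(A_i)g(\overline{B_i})\,B_i(-1)}{q\,{A_i \choose B_i}}.$$
Plugging this back into the expression obtained in the first step, the factors of $q^n$ and of $\prod_{i=1}^n B_i(-1)$ cancel exactly, and the product $\prod_{i=1}^n {A_i \choose B_i}$ emerges as an overall constant. The remaining character sum is immediately recognized as ${_{n+1}F}_n(\cdots)_q^{\ast}$, which gives the claimed identity after dividing by $\prod_{i=1}^n {A_i \choose B_i}$.

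The principal obstacle is purely bookkeeping: tracking the powers of $q$, the $\chi(-1)$ and $B_i(-1)$ signs, and verifying that every invocation of Lemma \ref{lemma2_2} is legitimate with the $\delta$-term absent. Since the hypotheses $A_0 \neq \varepsilon$ and $A_i \neq B_i$ are used exactly to guarantee $\delta(A_0) = 0$ and $\delta(A_i\overline{B_i}) = 0$, no genuine analytic difficulty arises beyond careful algebra with Gauss sums.
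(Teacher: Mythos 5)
Your argument is correct: the two applications of Lemma \ref{lemma2_2} are legitimate because $\delta(A_0\chi\overline{\chi})=\delta(A_0)=0$ and $\delta(A_i\chi\overline{B_i\chi})=\delta(A_i\overline{B_i})=0$ under the stated hypotheses, and the powers of $q$ and the factors $B_i(-1)$ cancel exactly as you claim. The paper itself offers no proof of this proposition (it is quoted from McCarthy's paper), and your Gauss-sum computation is essentially the argument given in that cited source.
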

In \cite[Lemma 3.3]{mccarthy2}, McCarthy proved a relation between ${_{n+1}F}_n(\cdots)^{\ast}$ and the $p$-adic hypergeometric series 
${_{n}G}_n[\cdots]$. We note that the relation is true for $\mathbb{F}_q$ though it was proved for $\mathbb{F}_p$ in \cite{mccarthy2}. 
Hence, we obtain a relation between ${_{n}G}_n[\cdots]$ and the Greene's finite field hypergeometric functions due to Proposition \ref{prop-300}. In the following proposition, we 
list three such identities which will be used to prove our main results.
\begin{proposition}\label{prop-301}
Let $x\neq 0$. Then
\begin{align}
\label{tr1}{_2G}_{2}\left[\begin{array}{cc}
           \frac{1}{4}, & \frac{3}{4} \\
           0, & 0
         \end{array}|x\right]_q & = -q\cdot {_2}F_1\left(
         \begin{array}{cc}
           \chi_4, & \chi_4^3 \\
           ~ & \varepsilon \\
         \end{array}|\frac{1}{x}
       \right)_q;\\
\label{tr2}  {_2G}_{2}\left[\begin{array}{cc}
           \frac{1}{2}, & \frac{1}{2} \\
           0, & 0
         \end{array}|x\right]_q & = -q\cdot {_2}F_1\left(
         \begin{array}{cc}
           \varphi, & \varphi \\
           ~ & \varepsilon \\
         \end{array}|\frac{1}{x}
       \right)_q;\\
\label{tr3}       {_3G}_{3}\left[\begin{array}{ccc}
           \frac{1}{2}, & \frac{1}{2} &\frac{1}{2}\\
           0, & 0, &0
         \end{array}|x\right]_q & = q^2\cdot {_3}F_2\left(
         \begin{array}{ccc}
           \varphi, & \varphi, &\varphi \\
           ~ & \varepsilon, &\varepsilon \\
         \end{array}|\frac{1}{x}
       \right)_q.
\end{align}
We note that \eqref{tr1} is valid when $q\equiv 1\pmod{4}$. 
\end{proposition}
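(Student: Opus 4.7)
The plan is to assemble the three identities by chaining two already-available results: \cite[Lemma 3.3]{mccarthy2}, which relates the $p$-adic hypergeometric series $_{n}G_n[\cdots]$ to McCarthy's starred finite field hypergeometric function $_{n+1}F_n(\cdots)^{\ast}$, together with Proposition \ref{prop-300}, which converts the starred function into Greene's $_{n+1}F_n(\cdots)$. The inversion of the argument ($x \mapsto 1/x$) that appears on the right-hand side of \eqref{tr1}--\eqref{tr3} is built into the first of these two steps; the scalar factors $-q$, $-q$, $q^2$ will come entirely from the binomial coefficients produced by Proposition \ref{prop-300}.

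First I would identify the character parameters corresponding to each rational parameter of the $_nG_n$-function via $a_k \leftrightarrow \omega^{(q-1)a_k}$. Under the standing hypothesis $q\equiv 1\pmod{4}$ for \eqref{tr1}, this sends $\tfrac{1}{4}\mapsto \chi_4$, $\tfrac{3}{4}\mapsto \chi_4^3$, and $0\mapsto \varepsilon$. For \eqref{tr2} and \eqref{tr3} we have $\tfrac{1}{2}\mapsto \varphi$ and $0\mapsto \varepsilon$ (no congruence condition is needed, since $\varphi$ exists for every odd $q$). Applying \cite[Lemma 3.3]{mccarthy2} then produces an identity of the form
\begin{equation*}
{_n}G_n\!\left[\begin{array}{c} a_1,\ldots,a_n \\ b_1,\ldots,b_n \end{array}\Big|\,x\right]_q
= c_n \cdot {_{n+1}}F_n\!\left(\begin{array}{c} A_0,A_1,\ldots,A_n \\ \phantom{A_0,}\,B_1,\ldots,B_n \end{array}\Big|\,x^{-1}\right)^{\!\ast}_q
\end{equation*}
for an explicit constant $c_n$ depending only on $n$.

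Next I would verify the hypotheses $A_0\neq\varepsilon$ and $A_i\neq B_i$ of Proposition \ref{prop-300}. In each of the three cases, $A_0$ is a nontrivial character ($\chi_4$ or $\varphi$), and each lower parameter $B_i=\varepsilon$ differs from the corresponding $A_i\in\{\chi_4^3,\varphi\}$, so the hypotheses are satisfied. Invoking Proposition \ref{prop-300} then introduces the factor $\prod_{i=1}^n {A_i\choose B_i}^{-1}$. Since every $B_i=\varepsilon$ and every $A_i\neq\varepsilon$, identity \eqref{eq-2} gives ${A_i\choose \varepsilon}=-\tfrac{1}{q}$, so the product equals $(-q)^n$. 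Combining this with the constant $c_n$ from the previous step and simplifying should recover exactly $-q$ for $n=1$ in \eqref{tr1} and \eqref{tr2}, and $q^2$ for $n=2$ in \eqref{tr3}.

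The only genuinely delicate point is keeping track of the constant $c_n$ and of the argument inversion $x\mapsto x^{-1}$ in \cite[Lemma 3.3]{mccarthy2}, since that lemma is stated over $\mathbb{F}_p$ and one must observe, as the authors already note, that it carries over verbatim to $\mathbb{F}_q$. Once these bookkeeping issues are settled, the three identities fall out as direct specializations; the restriction $q\equiv 1\pmod{4}$ in \eqref{tr1} is needed precisely so that the character $\chi_4$ is defined on $\mathbb{F}_q^{\times}$, whereas $\varphi$ and $\varepsilon$ suffice for \eqref{tr2} and \eqref{tr3}.
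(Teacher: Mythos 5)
Your proposal is correct and follows essentially the same route as the paper: apply \cite[Lemma 3.3]{mccarthy2} to pass from the ${_n}G_n$-series to the starred function ${_{n+1}F}_n(\cdots)^{\ast}$ evaluated at $x^{-1}$, then convert to Greene's function via Proposition \ref{prop-300} and the value ${A_i\choose \varepsilon}=-\tfrac{1}{q}$ from \eqref{eq-2}. The only loose end is your unspecified constant $c_n$, which in the cited lemma is simply $1$ (the starred function at $1/x$ equals the $G$-function at $x$ on the nose), so the factors $-q$, $-q$, $q^2$ indeed come entirely from the binomial coefficients, exactly as in the paper.
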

\begin{proof} 
Applying \cite[Lemma 3.3]{mccarthy2} we have 
\begin{align}\label{tr_eq3}
{_2}F_1\left(
         \begin{array}{cc}
           \chi_4, & \chi_4^3 \\
           ~ & \varepsilon \\
         \end{array}|\frac{1}{x}
       \right)_q^{\ast}={_2G}_{2}\left[\begin{array}{cc}
           \frac{1}{4}, & \frac{3}{4} \\
           0, & 0
         \end{array}|x\right]_q.
\end{align}
From \eqref{eq-2}, we have ${\chi_4^3 \choose \varepsilon}=\frac{-1}{q}$. Using this value and Proposition \ref{prop-300} we find that
\begin{align}\label{tr_eq1}
{_2}F_1\left(
         \begin{array}{cc}
           \chi_4, & \chi_4^3 \\
           ~ & \varepsilon \\
         \end{array}|\frac{1}{x}
       \right)_q=-\frac{1}{q}{_2}F_1\left(
         \begin{array}{cc}
           \chi_4, & \chi_4^3 \\
           ~ & \varepsilon \\
         \end{array}|\frac{1}{x}
       \right)_q^{\ast}.
\end{align}
Now, combining \eqref{tr_eq3} and \eqref{tr_eq1} we readily obtain \eqref{tr1}. 
Proceeding similarly we deduce \eqref{tr2} and \eqref{tr3}. This completes the proof. 
\end{proof}
We now prove our main results.
\begin{proof}[Proof of Theorem \ref{MT-1}]
From Proposition \ref{prop-1} and Proposition \ref{prop-2} we have
\begin{align}
\sum_{y\in\mathbb{F}_q}\varphi(y)\varphi(1-2y+xy^2)&=-\varphi(-2)\cdot{_2G}_{2}\left[\begin{array}{cc}
           \frac{1}{4}, & \frac{3}{4} \\
           0, & 0
         \end{array}|\frac{1}{x}\right]_q\notag\\
         &=-{_2G}_{2}\left[\begin{array}{cc}
           \frac{1}{4}, & \frac{3}{4} \\
           0, & 0
         \end{array}|\frac{1}{1-x}\right]_q, \notag
\end{align}
which readily gives the desired transformation.
\end{proof}
\begin{proof}[Proof of Theorem \ref{MT-4}]
From \cite[Eq. 4.5]{GS} we have
\begin{align}\label{eq-7.1}
\varphi((1-u)/u){_3}F_{2}\left(
                           \begin{array}{ccc}
                             \varphi, & \varphi, & \varphi \\
                              & \varepsilon, & \varepsilon \\
                           \end{array}|\frac{u}{u-1}
                         \right)_p=&\varphi(u)f(u)^2+2\frac{\varphi(-1)}{p}f(u)-\frac{p-1}{p^2}\varphi(u)\notag\\
&+\frac{p-1}{p^2}\delta(1-u),
\end{align}
where $u=\frac{x}{x-1}, x\neq 1$ and
\begin{align}
f(u):=\frac{p}{p-1}\sum_{\chi\in\widehat{\mathbb{F}_p^{\times}}}
{\varphi\chi^2\choose\chi}{\varphi\chi\choose\chi}\chi\left(\frac{u}{4}\right).\notag
\end{align}
From \eqref{tr3} and \eqref{eq-7.1}, we have
\begin{align}\label{eq-7.6}
\frac{\varphi((1-u)/u)}{p^2}\cdot{_3}G_{3}\left[
                           \begin{array}{ccc}
                             \frac{1}{2}, & \frac{1}{2}, & \frac{1}{2} \\
                            0, & 0, & 0 \\
                           \end{array}|\frac{u-1}{u}
                         \right]_p& = \varphi(u)f(u)^2+2\frac{\varphi(-1)}{p}f(u)-\frac{p-1}{p^2}\varphi(u)\notag\\
&+\frac{p-1}{p^2}\delta(1-u).
\end{align}
Now, Proposition \ref{prop-1} gives
\begin{align}\label{eq-7.2}
f(u)=\frac{-\varphi(-u)}{p}-\frac{1}{p}\cdot{_2}G_2\left[\begin{array}{cc}
           \frac{1}{4}, & \frac{3}{4} \\
           0, & 0
         \end{array}|\frac{1}{u}\right]_p.
\end{align}
Finally, combining \eqref{eq-7.6} and \eqref{eq-7.2} and then putting $u=\frac{x}{x-1}$ we obtain the desired result. This completes the proof of the theorem.
\end{proof}
\begin{proof}[Proof of Theorem \ref{MT-3}]
Let $A=B=\varphi$ and $x\neq0,\pm1$. Then \cite[Thm. 4.16]{greene} yields
\begin{align}\label{eq-6.1}
{_2}F_1\left(
         \begin{array}{cc}
           \varphi, & \varphi \\
           ~ & \varepsilon \\
         \end{array}|x
       \right)_q=&\frac{\varphi(-1)}{q}\varphi(x(1+x))\notag\\
&+\varphi(1+x)\frac{q}{q-1}\sum_{\chi\in\widehat{\mathbb{F}_q^{\times}}}
{\varphi\chi^2\choose\chi}{\varphi\chi\choose\chi}\chi\left(\frac{x}{(1+x)^2}\right).
\end{align}
Now, using Proposition \ref{prop-1} we have
\begin{align}\label{eq-6.2}
 \sum_{\chi\in\widehat{\mathbb{F}_q^{\times}}}
{\varphi\chi^2\choose\chi}{\varphi\chi\choose\chi}\chi\left(\frac{x}{(1+x)^2}\right)=&-\frac{q-1}{q^2}\varphi\left(\frac{-4x}{(1+x)^2}\right)\notag\\
&-\frac{q-1}{q^2}\cdot
{_2}G_2\left[\begin{array}{cc}
           \frac{1}{4}, & \frac{3}{4} \\
           0, & 0
         \end{array}|\frac{(1+x)^2}{4x}\right]_q.
\end{align}
Applying Theorem \ref{MT-1} on the right hand side of \eqref{eq-6.2} we obtain
\begin{align}\label{eq-6.3}
 \sum_{\chi\in\widehat{\mathbb{F}_q^{\times}}}
{\varphi\chi^2\choose\chi}{\varphi\chi\choose\chi}\chi\left(\frac{x}{(1+x)^2}\right)=&-\frac{q-1}{q^2}\varphi\left(-x\right)\notag\\
&-\frac{q-1}{q^2}\varphi(-2)\cdot
{_2}G_2\left[\begin{array}{cc}
           \frac{1}{4}, & \frac{3}{4} \\
           0, & 0
         \end{array}|\frac{(1+x)^2}{(1-x)^2}\right]_q.
\end{align}
Combining \eqref{eq-6.1} and \eqref{eq-6.3} we have
\begin{align}\label{eq-6.4}
{_2}G_2\left[\begin{array}{cc}
           \frac{1}{4}, & \frac{3}{4} \\
           0, & 0
         \end{array}|\frac{(1+x)^2}{(1-x)^2}\right]_q=-q\varphi(-2)\varphi(1+x)\cdot{_2}F_1\left(
         \begin{array}{cc}
           \varphi, & \varphi \\
           ~ & \varepsilon \\
         \end{array}|x
       \right)_q,
\end{align}
which completes the proof of the theorem due to \eqref{tr2}.
\end{proof}
We finally present the proof of Theorem \ref{MT-2}.
\begin{proof}[Proof of Theorem \ref{MT-2}]
Let $q\equiv1\pmod{4}$. Then we readily obtain the desired transformation for the finite field hypergeometric functions
from \eqref{eqn-MT-3} using \eqref{tr1} and \eqref{tr2}.
\end{proof}
\section{Special values of ${_2}G_2[\cdots]$}
Finding special values of hypergeometric function is an important and interesting problem. Only a few special values of
the ${_n}G_n$-functions are known (see for example \cite{BSM}). In \cite{BSM}, the authors with McCarthy obtained some special values of ${_n}G_n[\cdots]$ when $n=2, 3, 4$.
From \eqref{eq-6.4}, for any odd prime $p$ and $x\neq 0, \pm 1$, we have
\begin{align}\label{eq-6.4-1}
{_2}G_2\left[\begin{array}{cc}
           \frac{1}{4}, & \frac{3}{4} \\
           0, & 0
         \end{array}|\frac{(1+x)^2}{(1-x)^2}\right]_q=-q\varphi(-2)\varphi(1+x)\cdot{_2}F_1\left(
         \begin{array}{cc}
           \varphi, & \varphi \\
           ~ & \varepsilon \\
         \end{array}|x
       \right)_q.
\end{align}
Values of the finite field hypergeometric function ${_2}F_1\left(
         \begin{array}{cc}
           \varphi, & \varphi \\
           ~ & \varepsilon \\
         \end{array}|x
       \right)_q$ are obtained for many values of $x$. For example, see Barman and Kalita \cite{BK1, BK3}, Evans and Greene \cite{evans2}, Greene \cite{greene},
       Kalita \cite{GK}, and Ono \cite{ono}.
\begin{proof}[Proof of Theorem \ref{MT-5}]
 Let $\lambda \in \{-1, \frac{1}{2}, 2\}$. If $p$ is an odd prime, then from \cite[Thm. 2]{ono} we have
        \begin{align*}
{_2}F_1\left(
         \begin{array}{cc}
           \varphi, & \varphi \\
           ~ & \varepsilon \\
         \end{array}|\lambda
       \right)_p=\left\{
                                             \begin{array}{ll}
                                               0, & \hbox{if $p\equiv 3\pmod{4}$;} \\
                                               \frac{2x(-1)^{\frac{x+y+1}{2}}}{p}, & \hbox{if $p\equiv 1\pmod{4}$, $x^2+y^2=p$, and $x$ odd.}
                                             \end{array}
                                           \right.
\end{align*}
Putting the above values for $\lambda=\frac{1}{2}, 2$ into \eqref{eq-6.4-1} we readily obtain the required values of the ${_n}G_n$-function.
\par Let $q\equiv 1\pmod{4}$. Then from \eqref{tr1} we have
\begin{align*}
{_2}F_1\left(
         \begin{array}{cc}
           \chi_4, & \chi_4^3 \\
           ~ & \varepsilon \\
         \end{array}|\frac{1}{9}
       \right)_q=-\frac{1}{q}~{_2}G_2\left[\begin{array}{cc}
           \frac{1}{4}, & \frac{3}{4} \\
           0, & 0
         \end{array}|9\right]_q.
       \end{align*}
From the above identity we readily obtain the required value of the finite field hypergeometric function.
This completes the proof of the theorem.
\end{proof}
We now have the following corollary.
\begin{cor}\label{cor-1}
Let $p\equiv 1\pmod{4}$. We have
\begin{align*}
       {\chi_4\choose \varphi}+ {\chi_4^3\choose \varphi}=
                                               \frac{2x(-1)^{\frac{x+y+1}{2}}}{p},
       \end{align*}
       where  $x^2+y^2=p$ and $x$ is odd.
\end{cor}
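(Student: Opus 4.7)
The idea is to reduce the binomial coefficient sum ${\chi_4 \choose \varphi} + {\chi_4^3 \choose \varphi}$ to the character sum attached to the CM elliptic curve $z^2 = v - v^3$, which is evaluated by Ono's formula --- the same ingredient already used in proving Theorem \ref{MT-5}.

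First, using the definition \eqref{eq-0} together with $\overline{\varphi} = \varphi$ and $\varphi(-1) = 1$ (since $p \equiv 1 \pmod{4}$), I would write
\[
{\chi_4 \choose \varphi} + {\chi_4^3 \choose \varphi} = \frac{1}{p}\sum_{u \in \mathbb{F}_p}\bigl(\chi_4(u) + \chi_4^3(u)\bigr)\varphi(1-u).
\]
The key observation is that $\chi_4(u) + \chi_4^3(u) = \chi_4(u)(1 + \varphi(u))$ vanishes on non-residues and equals $2\chi_4(u)$ on nonzero residues. Setting $u = v^2$ and using $\chi_4(v^2) = \varphi(v)$, together with the two-to-one nature of the squaring map, collapses the sum to
\[
{\chi_4 \choose \varphi} + {\chi_4^3 \choose \varphi} = \frac{1}{p}\sum_{v \in \mathbb{F}_p}\varphi(v)\varphi(1 - v^2) = \frac{1}{p}\sum_{v \in \mathbb{F}_p}\varphi\bigl(v(1-v)(1+v)\bigr).
\]

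Next, I would identify the right-hand side as ${_2}F_1(\varphi, \varphi; \varepsilon | -1)_p$ via Greene's character-sum representation of the finite field ${_2}F_1$ \cite{greene}, which in the present case (using $\varphi(-1) = 1$) reads
\[
{_2}F_1(\varphi, \varphi; \varepsilon | -1)_p = \frac{1}{p}\sum_{v \in \mathbb{F}_p}\varphi\bigl(v(1-v)(1+v)\bigr).
\]
Ono's Theorem \cite[Thm.~2]{ono} at $\lambda = -1$, which is exactly the formula already invoked in the proof of Theorem \ref{MT-5}, then yields
\[
{_2}F_1(\varphi, \varphi; \varepsilon | -1)_p = \frac{2x(-1)^{(x+y+1)/2}}{p}
\]
for $p = x^2 + y^2$ with $x$ odd, and the corollary follows immediately.

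The main (and essentially only) delicate step is the first reduction: carefully tracking the two-to-one substitution $u = v^2$ so that $\chi_4(u) + \chi_4^3(u)$ correctly reassembles as $2\varphi(v)$ after the change of variable. All remaining pieces are standard character-sum identities and Ono's theorem, both already in use earlier in the paper.
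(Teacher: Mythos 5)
Your proof is correct, but it takes a genuinely different route from the paper's. The paper derives the corollary indirectly: it combines Theorem \ref{MT-5} (the evaluation at $\lambda=1/9$, itself obtained from Ono's theorem at $\lambda=\tfrac12,2$ through the transformation \eqref{eq-6.4-1}) with an external evaluation of the same ${_2}F_1\left(\begin{smallmatrix}\chi_4, & \chi_4^3\\ & \varepsilon\end{smallmatrix}\mid \tfrac19\right)$ from \cite[Thm. 1.4 (i)]{BK1}; this yields the identity with a residual factor $\varphi(2)\chi_4(-1)$, and the rest of the paper's proof is a case analysis modulo $8$ showing that this factor is always $1$. You instead evaluate ${\chi_4\choose \varphi}+{\chi_4^3\choose \varphi}$ directly: the identity $\chi_4(u)+\chi_4^3(u)=\chi_4(u)(1+\varphi(u))$ restricts the Jacobi sum to nonzero squares, the two-to-one substitution $u=v^2$ together with $\chi_4(v^2)=\varphi(v)$ and $\varphi(v)=\varphi(-v)$ converts it into $\frac{1}{p}\sum_v\varphi\bigl(v(1-v)(1+v)\bigr)$ (the factor $2$ from $1+\varphi(u)$ cancelling against the double cover), and this is precisely Greene's character-sum representation of ${_2}F_1\left(\begin{smallmatrix}\varphi, & \varphi\\ & \varepsilon\end{smallmatrix}\mid -1\right)_p$ since $\varphi(-1)=1$; Ono's theorem at $\lambda=-1$ then finishes. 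I checked each of these steps and they are sound. Your argument is more self-contained --- it uses only Ono's theorem, which the paper already invokes, and avoids both the external reference \cite{BK1} and the $\varphi(2)\chi_4(-1)$ sign bookkeeping --- whereas the paper's version has the merit of exhibiting the corollary as a consistency check between its new Theorem \ref{MT-5} and the earlier literature.
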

\begin{proof}
From Theorem \ref{MT-5} and \cite[Thm. 1.4 (i)]{BK1} we have
\begin{align*}
       {\chi_4\choose \varphi}+ {\chi_4^3\choose \varphi}=
                                               \frac{2x\varphi(2)\chi_4(-1)(-1)^{\frac{x+y+1}{2}}}{p},
       \end{align*}
       where  $x^2+y^2=p$ and $x$ is odd.
       Let $m$ be the order of $\chi\in \widehat{\mathbb{F}_q^{\times}}$.
       We know that $\chi(-1)=-1$ if and only if $m$ is even and $(q-1)/m$ is odd.
       Since $p\equiv 1\pmod{4}$, therefore, either $p\equiv 1\pmod{8}$ or $p\equiv 5\pmod{8}$. If
       $p\equiv 1\pmod{8}$, then $\varphi(2)=\chi_4(-1)=1$. Also, if $p\equiv 5\pmod{8}$, then $\varphi(2)=\chi_4(-1)=-1$. Hence, in both the cases
       $\varphi(2)\cdot\chi_4(-1)=1$. This completes the proof.
\end{proof}

\begin{proof}[Proof of Theorem \ref{MT-7}]
From \cite[Thm. 1.1]{GK}, for $q\equiv 1\pmod{8}$, we have
\begin{align}\label{eq-8.2}
{_2}F_1\left(
         \begin{array}{cc}
           \varphi, & \varphi \\
           ~ & \varepsilon \\
         \end{array}|\frac{4\sqrt{2}}{2\sqrt{2}\pm3}\right)_q=\varphi(3\pm2\sqrt{2})\left\{{\chi_4\choose \varphi}+{\chi_4^3\choose \varphi}\right\}.
       \end{align}
       Now, comparing \eqref{eq-6.4} and \eqref{eq-8.2} for $x=\frac{4\sqrt{2}}{2\sqrt{2}\pm3}$ we obtain \eqref{eq-8.5}. Similarly, using
       \cite[Thm. 1.1]{GK} and \eqref{eq-6.4} for $x=\frac{4}{2\pm\sqrt{3}}$ we derive \eqref{eq-8.6} and \eqref{eq-8.7}.
\end{proof}
\begin{proof}[Proof of Theorem \ref{MT-6}]
From \eqref{tr1}, we have
\begin{align}\label{eq-8.8}
       {_2}F_1\left(
         \begin{array}{cc}
           \chi_4, & \chi_4^3 \\
           ~ & \varepsilon \\
         \end{array}|\left(\frac{-2\sqrt{2}\pm3}{6\sqrt{2}\pm3}\right)^2
       \right)_q=-\frac{1}{q}\cdot{_2}G_2\left[
         \begin{array}{cc}
           \frac{1}{4}, & \frac{3}{4} \\
           0, & 0 \\
         \end{array}|\left(\frac{6\sqrt{2}\pm3}{-2\sqrt{2}\pm3}\right)^2
       \right]_q.
       \end{align}
Comparing \eqref{eq-8.5} and  \eqref{eq-8.8} we readily obtain \eqref{eq-8.3}.
Again, we have
\begin{align}\label{eq-8.9}
       {_2}F_1\left(
         \begin{array}{cc}
           \chi_4, & \chi_4^3 \\
           ~ & \varepsilon \\
         \end{array}|\left(\frac{-2\pm\sqrt{3}}{6\pm\sqrt{3}}\right)^2
       \right)_q=-\frac{1}{q}\cdot{_2}G_2\left[
         \begin{array}{cc}
           \frac{1}{4}, & \frac{3}{4} \\
           0, & 0 \\
         \end{array}|\left(\frac{6\pm\sqrt{3}}{-2\pm\sqrt{3}}\right)^2
       \right]_q.
       \end{align}
       Now, comparing \eqref{eq-8.7} and \eqref{eq-8.9} we deduce \eqref{eq-8.4}.
\end{proof}
Applying Corollary \ref{cor-1}, from \eqref{eq-8.5} and \eqref{eq-8.3} we have the following corollary.
\begin{cor}
Let $p\equiv 1\pmod{8}$. Then
\begin{align*}
       {_2}G_2\left[
         \begin{array}{cc}
           \frac{1}{4}, & \frac{3}{4} \\
           0, & 0 \\
         \end{array}|\left(\frac{6\sqrt{2}\pm3}{-2\sqrt{2}\pm3}\right)^2
       \right]_p &= -2x\varphi(6\pm 12\sqrt{2})(-1)^{\frac{x+y+1}{2}},
       \end{align*}
       where  $x^2+y^2=p$ and $x$ is odd.
\end{cor}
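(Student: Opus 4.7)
The plan is to combine equation \eqref{eq-8.5} of Theorem \ref{MT-7} with the closed-form evaluation of ${\chi_4\choose\varphi}+{\chi_4^3\choose\varphi}$ supplied by Corollary \ref{cor-1}. Since the hypothesis $p\equiv 1\pmod{8}$ is stronger than $p\equiv 1\pmod{4}$, both results apply simultaneously with $q=p$, and the argument becomes essentially a one-line substitution.

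Concretely, I would first specialize \eqref{eq-8.5} to $q=p$, giving
\[
{_2}G_2\left[\begin{array}{cc}\frac14, & \frac34 \\ 0, & 0\end{array}\Big|\left(\tfrac{6\sqrt{2}\pm 3}{-2\sqrt{2}\pm 3}\right)^{\!2}\right]_p
= -p\,\varphi(6\pm 12\sqrt{2})\left\{{\chi_4\choose\varphi}+{\chi_4^3\choose\varphi}\right\}.
\]
Then I would invoke Corollary \ref{cor-1}, which under the hypothesis $p\equiv 1\pmod{4}$ gives
\[
{\chi_4\choose\varphi}+{\chi_4^3\choose\varphi}=\frac{2x(-1)^{(x+y+1)/2}}{p},
\]
where $x^2+y^2=p$ with $x$ odd. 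Substituting this into the previous display, the factor of $p$ out front cancels with the denominator $p$, yielding exactly the claimed formula.

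The only thing to check is the compatibility of hypotheses: Theorem \ref{MT-7}'s identity \eqref{eq-8.5} requires $q\equiv 1\pmod{8}$, while Corollary \ref{cor-1} requires $p\equiv 1\pmod{4}$. Both are satisfied under the assumption $p\equiv 1\pmod{8}$, so there is no conflict. There is no genuine obstacle here; the work has already been done in proving Theorem \ref{MT-7} and Corollary \ref{cor-1}, and the present corollary is simply the arithmetic combination of the two. The only subtlety worth verifying explicitly is that $\varphi(6\pm 12\sqrt{2})$ is well-defined in $\mathbb{F}_p$ when $p\equiv 1\pmod 8$, which follows since $2$ is a quadratic residue modulo such $p$, so $\sqrt{2}\in\mathbb{F}_p$.
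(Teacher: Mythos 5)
Your proposal is correct and is exactly the paper's argument: the authors also obtain this corollary by substituting the evaluation of ${\chi_4\choose\varphi}+{\chi_4^3\choose\varphi}$ from Corollary \ref{cor-1} into \eqref{eq-8.5}, with the factor of $p$ cancelling as you describe. Your added remarks on hypothesis compatibility and on $\sqrt{2}\in\mathbb{F}_p$ are sensible but not needed beyond what the paper states.
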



\begin{thebibliography}{99}

\bibitem{ahlgren}
 S. Ahlgren and K. Ono, {\it A  Gaussian  hypergeometric  series  evaluation  and  Ap\'{e}ry  number  congruences},
J. Reine Angew. Math. 518 (2000), 187--212.

\bibitem{bailey}
W. Bailey, {\it Generalized hypergeometric series}, Cambridge University Press, Cambridge, 1935.

\bibitem{BK} R. Barman and G. Kalita, {\it Hypergeometric functions over $\mathbb{F}_q$ and traces of Frobenius for elliptic curves}, Proc. Amer. Math. Soc. 141 (2013), no. 10,
3403--3410.

\bibitem{BK1}
R. Barman and G. Kalita, {\it Elliptic curves and special values of Gaussian hypergeometric series},
J. Number Theory 133 (2013), 3099--3111.

\bibitem{BK3} R. Barman and G. Kalita, {\it Certain values of Gaussian hypergeometric series and a family of algebraic curves},
Int. J. Number Theory 8 (2012), no. 4, 945--961.

\bibitem{BS2}
R. Barman and N. Saikia, {\it Certain Transformations for Hypergeometric series in the p-adic setting}, Int. J. Number Theory 11 (2015), no. 2, 645--660.

\bibitem{BS1}
R. Barman and N. Saikia, {\it $p$-Adic gamma function and the trace of Frobenius of elliptic curves},
J. Number Theory 140 (2014), no. 7, 181--195.

\bibitem{BSM}
R. Barman, N. Saikia and D. McCarthy, {\it Summation identities and special values of hypergeometric series in the $p$-adic setting}, J. Number Theory 153 (2015), 63--84.

\bibitem{evans}
B. Berndt, R. Evans, and K. Williams, {\it Gauss and Jacobi Sums}, Canadian Mathematical Society Series of Monographs and Advanced Texts,
A Wiley-Interscience Publication, John Wiley \& Sons, Inc., New York, (1998).

\bibitem{evans-mod}
R. Evans, {\it Hypergeometric $_3F_2(1/4)$ evaluations  over  finite  fields  and  Hecke  eigenforms}, Proc. Amer. Math. Soc. 138 (2010), no. 2, 517--531.

\bibitem{evans2} R. Evans and J. Greene, {\it Evaluation of Hypergeometric Functions over Finite Fields},
Hiroshima Math. J. 39 (2009), no. 2, 217-235.

\bibitem {EG} R. Evans and J. Greene, \textit{Clausen's theorem and hypergeometric functions over finite fields},
Finite Fields and Their Applications 15 (2009), 97--109.


\bibitem{frechette}
S. Frechette, K. Ono, and M. Papanikolas, {\it Gaussian  hypergeometric  functions  and  traces  of  Hecke operators}, Int. Math. Res. Not. 2004, no. 60, 3233--3262.

\bibitem{fuselier}
J.  Fuselier, {\it Hypergeometric  functions  over $\mathbb{F}_p$ and  relations  to  elliptic  curves  and  modular  forms}, Proc. Amer. Math. Soc. 138 (2010), no.1, 109--123.

\bibitem{Fuselier-McCarthy} J. Fuselier and D. McCarthy, {\it Hypergeometric type identities in the $p$-adic setting and modular forms},
Proc. Amer. Math. Soc. 144 (2016), 1493--1508


\bibitem{greene}
J. Greene, {\it Hypergeometric functions over finite fields}, Trans. Amer. Math. Soc. 301 (1987), no. 1, 77--101.

\bibitem{greene2}
J. Greene, {\it Character Sum Analogues for Hypergeometric and Generalized Hypergeometric Functions over Finite Fields},
Ph.D. thesis, Univ. of Minnesota, Minneapolis, 1984.

\bibitem{GS}
J. Greene and D. Stanton, {\it A Character Sum Evaluation and Gaussian Hypergeometric Series}, J. Number Theory 23 (1986), 136--148.

\bibitem{gross}
B. H. Gross and N. Koblitz, {\it Gauss sum and the $p$-adic $\Gamma$-function}, Annals of Mathematics 109 (1979), 569--581.

\bibitem{koike}
M. Koike, {\it Hypergeometric series over finite fields and Ap\'{e}ry numbers}, Hiroshima Math. J. 22 (1992), no. 3, 461--467.

\bibitem{GK}
G. Kalita, {\it Values of Gaussian hypergeometric functions and their connection to algebraic curves}, Int. J. Number Theory 14 (2018), no. 1, 1--18.


\bibitem{kob} N. Koblitz, {\it $p$-adic analysis: a short course on recent work}, London Math. Soc. Lecture
Note Series, 46. Cambridge University Press, Cambridge-New York, (1980).

\bibitem{lennon} C. Lennon, {\it Trace formulas for Hecke operators, Gaussian hypergeometric functions, and the modularity of a threefold}, 
J. Number Theory 131 (2011), no. 12, 2320--2351.

\bibitem{lennon2} C. Lennon, {\it Gaussian hypergeometric evaluations of traces of Frobenius for elliptic curves}, Proc. Amer. Math. Soc. 139 (2011), no. 6, 1931--1938.

\bibitem{mccarthy1}
D. McCarthy, {\it Extending Gaussian hypergeometric series to the $p$-adic setting}, Int. J. Number Theory 8 (2012), no. 7, 1581--1612.

\bibitem{mccarthy2}
D. McCarthy, {\it The trace of Frobenius of elliptic curves and the $p$-adic gamma function}, Pacific J. Math. 261 (2013), no. 1, 219--236.

\bibitem{mccarthy3}
D. McCarthy, {\it Transformations of well-poised hypergeometric functions over finite  fields}, Finite Fields and Their Applications, 18 (2012), no. 6, 1133--1147.

\bibitem{mccarthy4} D. McCarthy, {\it On a supercongruence conjecture of Rodriguez-Villegas}, Proc. Amer. Math. Soc. 140 (2012), 2241--2254.

\bibitem{mortenson}
E. Mortenson, {\it Supercongruences for truncated ${_{n+1}}F_n$-hypergeometric series with applications to certain weight three newforms}, Proc. Amer. Math. Soc.
133 (2005), no. 2, 321--330.

\bibitem{mc-papanikolas}
D. McCarthy and M. Papanikolas, {\it A finite field hypergeometric function associated to eigenvalues of a Siegel eigenform}, Int. J. Number Theory 11 (2015), no. 8, 2431--2450.

\bibitem{ono} K. Ono, \textit{Values of Gaussian hypergeometric series}, Trans. Amer. Math. Soc. 350 (1998), no. 3, 1205--1223.

\bibitem{salerno} A. Salerno, {\it Counting  points  over finite fields  and  hypergeometric  functions}, Funct. Approx. Comment. Math. 49 (2013), no. 1, 137--157.

\bibitem{vega} M. V. Vega, {\it Hypergeometric functions over finite fields and their relations to algebraic curves}, Int. J. Number Theory 7 (2011), no. 8, 2171--2195.
\end{thebibliography}
\end{document}